\newcommand{\F}{\mathbb{F}}
\newcommand{\Q}{\mathbb{Q}}
\newcommand{\R}{\mathbb{R}}
\newcommand{\Z}{\mathbb{Z}}
\newcommand{\ff}{\mathfrak{f}}
\newcommand{\fq}{\mathfrak{q}}
\newcommand{\fl}{\mathfrak{l}}
\newcommand{\fb}{\mathfrak{b}}
\newcommand{\fa}{\mathfrak{a}}
\newcommand{\cN}{\mathcal{N}}
\newcommand{\cS}{\mathcal{S}}
\newcommand{\cT}{\mathcal{T}}
\newcommand{\OO}{\mathcal{O}}
\newcommand{\fN}{\mathfrak{N}}
\newcommand{\fp}{\mathfrak{p}}
\DeclareMathOperator{\Frob}{Frob}
\DeclareMathOperator{\Cl}{Cl}
\DeclareMathOperator{\Gal}{Gal}
\DeclareMathOperator{\Norm}{Norm}
\DeclareMathOperator{\ord}{ord}
\DeclareMathOperator{\Tr}{Tr}
\renewcommand{\setminus}{-}
\newcommand{\GL}{\operatorname{GL}}
\newtheorem{theorem}{Theorem}
\newtheorem{lemma}[theorem]{Lemma}
\newtheorem{conj}[theorem]{Conjecture}
\newtheorem{proposition}[theorem]{Proposition}
\theoremstyle{definition}
\theoremstyle{remark}
\newtheorem{remark}[equation]{Remark}
\definecolor{darkgreen}{rgb}{0,0.5,0}
\begin{document}

\title[]{
Perfect powers in elliptic divisibility sequences
}

\begin{abstract}
Let $E/\Q$ be an elliptic curve given by
an integral Weierstrass equation. Let $P \in E(\Q)$ be a point of infinite order, 
and let $(B_n)_{n\ge 1}$ be the 
elliptic divisibility sequence generated by $P$.
This paper is concerned with a question posed in
2007 by Everest, Reynolds and Stevens: does $(B_n)_{n \ge 1}$
contain only finitely many perfect powers?
We answer this question positively under the following
three additional assumptions:
\begin{enumerate}[(i)]
	\item $P$ is non-integral;
	\item $\Delta(E) >0$, where $\Delta(E)$ is the discriminant
		of $E$;
	\item $P \in E_0(\R)$, where $E_0(\R)$ denotes the connected
		component of identity.
\end{enumerate}
Our method makes use of 
Galois representations
attached to elliptic curves defined over totally real fields,
and their modularity.

We can deduce the same theorem without assumptions
(ii) and (iii) provided we assume some standard
conjectures from the Langlands programme.
\end{abstract}

\author{Maryam Nowroozi}
\address{Mathematics Institute\\
   University of Warwick\\
    CV4 7AL \\
    United Kingdom}

\email{maryam.nowroozi@warwick.ac.uk}

\author{Samir Siksek}

\address{Mathematics Institute\\
    University of Warwick\\
    CV4 7AL \\
    United Kingdom}

\email{s.siksek@warwick.ac.uk}

\date{\today}
\thanks{
Nowroozi is supported by the EPSRC studentship.
Siksek is supported by the
EPSRC grant \emph{Moduli of Elliptic curves and Classical Diophantine Problems}
(EP/S031537/1). }
\keywords{Elliptic divisibility sequences, Galois representations, modularity,
Serre's modularity conjecture}

\makeatletter
\@namedef{subjclassname@2020}{%
  \textup{2020} Mathematics Subject Classification}
\makeatother

\subjclass[2020]{Primary 11D61. Secondary 11G05, 11F80.}

\maketitle

\section{Introduction}
The problem of studying perfect powers in certain sequences has long been
of interest to number theorists. We mention two
well-known results.
\begin{itemize}
\item Peth\H{o} \cite{Petho} and, independently,
Shorey and Stewart \cite{ShoreyStewart} showed that
there are only finitely many perfect powers in any non-trivial
binary recurrence sequence.
\item Bugeaud, Mignotte, and Siksek \cite{FibLuc}
proved that the only perfect powers in the Fibonacci sequence
are $0$, $1$, $8$ and $144$. 
\end{itemize}

This paper is concerned with perfect powers in elliptic divisibility
sequences, which we now introduce.
Let $E/\mathbb{Q}$ be an elliptic curve given by an
integral Weierstrass model 
\begin{equation}\label{eqn:model}
    y^2+a_1xy+a_3y=x^3+a_2x^2+a_4x+a_6, \ \ a_1,\ldots,a_6\in \mathbb{Z}
\end{equation}
and let $P\in E(\mathbb{Q})$ be a point of infinite order. For $n\geq 1$,
we may write the multiple $nP$ as
\begin{equation}
    nP \; =\; \left(\frac{A_n}{B_n^2}, \frac{C_n}{B_n^3}\right)
\end{equation}
where $A_n$, $B_n$, $C_n \in \Z$ with $B_n \ge 1$
	and $\gcd(B_n,A_n C_n)=1$.
The sequence $\mathcal{B}_{E,P}=(B_n)_{n\geq1}$ is called the \emph{elliptic divisibility sequence} associated to $E$ and $P$.
It is a divisibility sequence in the sense that if $m\mid n$, then $B_m \mid B_n$.
We say that a prime $p$ is a \textbf{primitive divisor} of the term $B_n$ if $p \mid B_n$
but $p \nmid B_m$ for all $m<n$.
	The following theorem of Silverman \cite[Lemma 9]{SILVERMANABC} is the most famous result regarding
the arithmetic of elliptic divisibility sequences.
\begin{theorem}[Silverman]\label{thm:silverman}
Let $(B_n)_{n \ge 1}$ be an elliptic divisibility sequence.
Then, at most finitely many terms in the sequence do not have a primitive divisor.
\end{theorem}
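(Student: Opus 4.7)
The plan is to compare two estimates for $\log B_n$: a lower bound from the theory of canonical heights, and an upper bound, valid whenever $B_n$ has no primitive divisor, that comes from the divisibility structure of $(B_n)$. Balancing the two will force $n$ to be bounded.

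First I would establish the asymptotic $\log B_n = \tfrac12 n^2 \hat{h}(P) + o(n^2)$. Writing $x(nP) = A_n/B_n^2$ in lowest terms, one has $h(x(nP)) = \log \max\{|A_n|, B_n^2\}$, and combining this with $\hat{h}(nP) = n^2\hat{h}(P)$ and $\lvert\hat{h} - \tfrac12 h\circ x\rvert = O(1)$ reduces the task to controlling the archimedean local canonical height $\hat\lambda_\infty(nP)$. The contributions at primes of bad reduction are collectively $O(1)$, and $\hat\lambda_\infty(nP) = o(n^2)$ will follow from the fact that $nP$ cannot approach the origin in $E(\R)$ too quickly; this is the main analytic obstacle of the plan, which I would address via the non-vanishing of $\hat h(P)$ together with a Baker-type lower bound for elliptic logarithms.

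For the upper bound, suppose $B_n$ has no primitive divisor. For each prime $p \mid B_n$ let $r(p) := \min\{m \ge 1 : p \mid B_m\}$ be its rank of apparition. By hypothesis $r(p) < n$, and by the strong divisibility property $\gcd(B_m, B_n) = B_{\gcd(m,n)}$ of elliptic divisibility sequences we have $r(p) \mid n$. The theory of formal groups, applied at primes of good reduction, yields
\[
\ord_p(B_n) \;\le\; \ord_p(B_{r(p)}) + \ord_p\!\left(n/r(p)\right).
\]
Multiplying by $\log p$, summing over $p \mid B_n$, and grouping primes by the value $d = r(p)$ gives
\[
\log B_n \;\le\; \sum_{d \mid n,\; d<n} \log B_d \;+\; \log n \;+\; O(1),
\]
where $\log n$ bounds the contribution from the $\ord_p(n/r(p))$ terms and $O(1)$ absorbs the finitely many bad primes.

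Finally, inserting $\log B_d \le \tfrac12 d^2 \hat{h}(P) + O(1)$ and using the elementary identity
\[
\sum_{d \mid n,\; d<n} d^2 \;=\; n^2 \sum_{d \mid n,\; d>1} \frac{1}{d^2} \;\le\; (\zeta(2) - 1)\, n^2,
\]
I would obtain
\[
\tfrac12 n^2 \hat{h}(P) + o(n^2) \;\le\; \tfrac12 (\zeta(2) - 1)\, n^2 \hat{h}(P) + o(n^2),
\]
which, because $2-\zeta(2) > 0$ and $\hat{h}(P) > 0$, is a contradiction for all sufficiently large $n$, proving the theorem.
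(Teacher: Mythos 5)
The paper does not prove this theorem; it is imported verbatim from Silverman's paper via the citation, so there is no internal proof to compare against. Your argument is, in all essentials, Silverman's original one: the height asymptotic $\log B_n \sim c\, n^2 \hat{h}(P)$, the rank-of-apparition bound $\log B_n \le \sum_{d \mid n,\, d<n}\log B_d + \log n + O(1)$ when $B_n$ has no primitive divisor (using strong divisibility to get $r(p)\mid n$ and the formal group to control $\ord_p$), and the comparison via $\sum_{d\mid n,\, d<n} d^2 \le (\zeta(2)-1)n^2$ with $\zeta(2)-1<1$. The only point of divergence is the key analytic input $\hat\lambda_\infty(nP)=o(n^2)$: Silverman derives it from Siegel's theorem (which is why his result is ineffective), whereas your proposed Baker/David-type lower bound for linear forms in elliptic logarithms is the route taken in the later effective versions (cf.\ the Verzobio reference in the paper's final remarks); either input works, but note that this step is the genuinely deep part of the proof and is not a consequence of $\hat{h}(P)>0$ alone.
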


In 2007, Everest, Reynolds and Stevens \cite{Everest} posed the 
question of whether an elliptic divisibility sequence $(B_n)_{n \ge 1}$
contains only finitely many perfect powers.
They proved the following somewhat weaker result.
\begin{theorem}[Everest, Reynolds and Stevens] \label{thm:ERS}
Let $(B_n)_{n \geq 1}$ be an elliptic divisibility sequence.
For any fixed $f \ge 2$, there are only finitely many $n$ for which $B_n$ is an $f^{\text{th}}$ power.
\end{theorem}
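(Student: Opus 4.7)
The plan is to combine Silverman's primitive-divisor theorem (Theorem~\ref{thm:silverman}) with the Hasse--Weil bound and a formal-group analysis at primes of good reduction. Invoking Theorem~\ref{thm:silverman}, I may assume $n$ is sufficiently large that $B_n$ admits a primitive prime divisor $p$; enlarging the threshold past the finitely many bad-reduction primes of $E$, I may further assume $p$ is a prime of good reduction. The defining property of a primitive divisor forces $\bar P \in \tilde E(\F_p)$ to have order exactly $n$, so the Hasse--Weil inequality $n \leq p+1+2\sqrt{p}$ yields $p \geq (\sqrt{n}-1)^2$. Thus primitive divisors grow at least linearly with $n$.

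If $B_n = y^f$, then $v_p(B_n) \geq f$ for the primitive divisor $p$ above. Via the formal logarithm isomorphism $\log_{\hat E}\colon \hat E(p\Z_p) \to p\Z_p$ (available for $p \geq 5$), this says $v_p(\log_{\hat E}(nP)) \geq f$; equivalently, $nP$ lies in the deep formal subgroup $\hat E(p^f\Z_p)$, which translates into the reduction of $P$ modulo $p^f$ still having the minimum possible order $n$ rather than the generic maximum $n \cdot p^{f-1}$. Heuristically, the generic value of $v_p(B_n)$ at a primitive divisor is $1$, so the condition $v_p(B_n) \geq f \geq 2$ is a rather restrictive ``thin'' condition on $p$, and the goal is to rule it out for all sufficiently large $n$.

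The hardest step will be making this thinness argument rigorous. A natural route is to apply a Baker-type lower bound on $p$-adic elliptic logarithms: such a bound controls $v_p(\log_{\hat E}(nP))$ from above by an effective function of $n$ (typically $O(\log n)$), and combining with the lower bound $p \geq (\sqrt{n}-1)^2 \to \infty$ eventually forces $v_p(B_n) = 1$ for primitive divisors once $n$ is large enough (depending on $f$). Since $1 < f$, the resulting contradiction with $B_n = y^f$ bounds $n$, yielding the theorem.
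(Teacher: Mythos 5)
Your opening reductions are sound: by Silverman's theorem a large index $n$ gives a primitive divisor $p$ of $B_n$, which you may take to be of good reduction, the order of $\bar P$ in $\tilde E(\F_p)$ is then exactly $n$, and Hasse--Weil gives $p \ge (\sqrt{n}-1)^2$. The fatal problem is the final step, which you yourself flag as "the hardest": you must show that the primitive divisor satisfies $v_p(B_n) < f$, and the proposed tool does not deliver this. First, the known $p$-adic Baker-type lower bounds for elliptic logarithms (David, Hirata--Kohno, R\'emond) have constants that depend on the prime $p$, and here $p$ is not fixed --- it is a primitive divisor of $B_n$ and grows at least like $n$ --- so the resulting upper bound on $v_\fp(\log_{\hat E}(nP))$ blows up with $n$ rather than stabilising below $f$. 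Second, even the idealised bound $v_p(B_n) = O(\log n)$ you posit does not force $v_p(B_n) < f$ for a \emph{fixed} $f$, since $O(\log n) \to \infty$; you would need $v_p(B_n)$ bounded by an absolute constant, indeed equal to $1$, for every large $n$. Third, and decisively: the assertion that primitive divisors of $B_n$ eventually occur to the first power is a squarefreeness statement of Wieferich type that is wide open, and if your "thinness" argument could be made rigorous it would immediately show that $B_n$ is not a perfect power of \emph{any} exponent for all large $n$ --- that is, it would resolve unconditionally the Everest--Reynolds--Stevens question that is the subject of this entire paper. That the route proves far too much is a strong sign it cannot be completed by known methods.

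For comparison, the actual proof (in the cited paper of Everest, Reynolds and Stevens) takes a completely different path: the condition $B_n = y^f$ is converted, via the multiplication-by-$m$ maps and a descent argument, into a rational point on one of finitely many auxiliary covers of $E$ of genus at least $2$, and Faltings' theorem bounds the number of such points. This is precisely why the result is ineffective, as remarked after the statement of Theorem~\ref{thm:ERS}. Your approach, were it viable, would be both effective and much stronger; as it stands, the key step is a genuine gap, not a technical detail.
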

The proof of Theorem~\ref{thm:ERS} crucially relies
on Faltings' theorem (the Mordell conjecture), and is therefore ineffective.
Theorem~\ref{thm:ERS} has been extended to the 
function field setting in \cite{CornelissenReynolds}
and to products of terms in elliptic divisibility sequences
\cite{Hajdu}.
However, the question of Everest, Reynolds and Stevens has remained open. The
only partial results we are aware of are due to Reynolds \cite{Reynolds}
and to Alfaraj \cite{Alfaraj2}. 

Reynolds proved that there are finitely many perfect powers in an elliptic
divisibility sequence  
arising from a non-integral point on an elliptic curve of the form
$y^2=x^3+d$, where $d \ne 0$ is an integer.
Alfaraj proves that there are only finitely many perfect powers
in an elliptic divisibility sequence arising from a non-integral
point on elliptic curves of the form $y^2=x(x^2+b)$,
where $b$ is a positive integer.
Both proofs attach a suitable Frey curve to the
equation $B_n=u^\ell$ and make use of the modularity
of the mod $\ell$ Galois representation of the Frey curve.
In the work of Reynolds, the Frey curve is defined over 
the rationals, and in Alfaraj's paper the Frey curve is defined
over a quadratic field.

The purpose of this paper is to generalize the results of Reynolds and Alfaraj
to general Weierstrass models. More precisely, we prove the following theorem.
\begin{theorem}\label{thm:Main1}
Let $E/\Q$ be an elliptic curve given by an integral 
Weierstrass equation \eqref{eqn:model}. Let $P \in E(\Q)$
and let $(B_n)_{n \ge 1}$ be the corresponding 
elliptic divisibility sequence. Suppose the following three
conditions hold:
\begin{enumerate}[(i)]
	\item $P$ is non-integral (equivalently $B_1 >1$);
	\item $\Delta(E)>0$ where $\Delta(E)$ is the discriminant
		of $E$;
	\item $P \in E_0(\R)$, where $E_0(\R)$ denotes the connected
		component of the identity.
\end{enumerate}
Then there are only finitely many perfect powers in the sequence
$(B_n)_{n \ge 1}$.
\end{theorem}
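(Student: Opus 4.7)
The plan is to attach a Frey elliptic curve over a totally real number field to each putative solution $B_n=u^\ell$, and derive a contradiction for all $\ell$ beyond an effective bound via modularity, irreducibility of the mod-$\ell$ Galois representation, and level lowering---following the paradigm of the Wiles/Ribet method and its extension to totally real fields. First, I would complete the square in \eqref{eqn:model} and put $E$ in the form $Y^2=4(x-e_1)(x-e_2)(x-e_3)$. Condition (ii) ensures the roots $e_i$ are real; the splitting field $K=\Q(e_1,e_2,e_3)\subset\R$ is Galois over $\Q$, hence totally real of degree dividing $6$. For every putative equation $B_n=u^\ell$ with $\ell$ prime, define $\alpha_i=A_n-e_iB_n^2\in\OO_K$ for $i=1,2,3$. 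A direct substitution yields
\[
\bigl(2C_n+a_1A_nB_n+a_3B_n^3\bigr)^2=4\alpha_1\alpha_2\alpha_3, \qquad \alpha_i-\alpha_j=(e_j-e_i)u^{2\ell},
\]
and condition (iii), which forces $x(nP)\ge e_3$ at one real embedding and hence (by Galois) at every real embedding of $K$, shows that each $\alpha_i$ is totally positive.

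Let $S$ be the finite set of primes of $\OO_K$ dividing $2\prod_{i<j}(e_j-e_i)$. The coprimality $\gcd(A_n,B_n)=1$ implies that no prime outside $S$ divides more than one of the $\alpha_i$; combined with $\alpha_1\alpha_2\alpha_3$ being a square, this forces each $\alpha_i$ to differ from a square by a representative from a finite subset of the $S$-Selmer group $K(S,2)$. Writing $\alpha_i=\delta_i\beta_i^2$ with $\delta_i$ from a fixed finite set and $\beta_i\in\OO_K$, the identity $\alpha_1-\alpha_2=(e_2-e_1)u^{2\ell}$ becomes a generalized Fermat equation of signature $(2,2,\ell)$ over $K$:
\[
\delta_1\beta_1^2-\delta_2\beta_2^2=(e_2-e_1)u^{2\ell}.
\]
To this I would attach a standard Frey elliptic curve $F=F_{n,\ell}$ defined over $K$, or over a totally real quadratic extension of $K$ chosen to accommodate the square root of $-\delta_1\delta_2$. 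A direct calculation shows that $\Delta(F)$ equals $c\cdot u^{2\ell}$ up to a bounded factor; outside a finite set of primes depending only on $E$, $F$ has multiplicative reduction at every prime above $u$ with $\ell\mid v(\Delta(F))$, so its conductor is supported on a fixed finite set of primes together with primes above $u$.

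Because $K$ is totally real, $F$ is modular by the results of Freitas--Le Hung--Siksek and their extensions. For $\ell$ outside an effectively computable finite set, $\bar\rho_{F,\ell}$ is absolutely irreducible, via the multiplicative reduction of $F$, its full $2$-torsion, and isogeny bounds \`a la Mazur--Momose--David. The level-lowering theorem of Fujiwara--Jarvis--Rajaei then strips the primes above $u$, yielding a parallel-weight-$2$ Hilbert newform $\mathfrak{f}$ of some level $\fN$ depending only on $E$, and a prime $\lambda\mid\ell$ of its Hecke field, with $\bar\rho_{F,\ell}\cong\bar\rho_{\mathfrak{f},\lambda}$. The set of such newforms is finite and explicitly computable. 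For each, I would compare $a_\fq(F)$ and $a_\fq(\mathfrak{f})$ at auxiliary primes $\fq\notin S$: the trace $a_\fq(F)$ depends only on $(A_n,B_n)\bmod\fq$ and ranges over an explicit finite set, so the congruence either forces $\ell$ to divide a fixed nonzero integer---bounding $\ell$---or else $\mathfrak{f}$ must correspond to an elliptic curve over $K$ whose Frobenius data at $\fq$ match those of $F$.

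The hard part will be the final elimination: the residual ``elliptic-curve type'' newforms whose Frobenius traces accidentally match those of $F$ at all small auxiliary primes must be ruled out using finer information. Here condition (iii) is essential: the total positivity of the $\alpha_i$ constrains the archimedean signature of $F$ at each real place of $K$, and this real-place datum excludes the surviving candidate newforms. A secondary obstacle is proving the uniform-in-$n$ irreducibility of $\bar\rho_{F,\ell}$, which requires combining isogeny bounds over totally real fields with the prescribed reduction profile of $F$ at primes above $u$. Once $\ell$ is bounded by some $L_0$ depending only on $E$ and $P$, Theorem~\ref{thm:ERS} applied to each $\ell\le L_0$ provides only finitely many $n$ with $B_n$ an $\ell$-th power, and hence $(B_n)_{n\ge 1}$ contains only finitely many perfect powers.
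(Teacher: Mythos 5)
Your overall architecture (Frey curve over a totally real field, modularity via Freitas--Le Hung--Siksek, irreducibility, Fujiwara--Jarvis--Rajaei level lowering, then bounding $\ell$) matches the paper's, and your algebraic setup --- the identities $\alpha_i-\alpha_j=(e_j-e_i)u^{2\ell}$ with $\alpha_1\alpha_2\alpha_3$ a square, and the descent to $\alpha_i=\delta_i\beta_i^2$ --- is a workable variant of what the paper does. The paper instead passes directly to the degree-at-most-$24$ field $L=K\bigl(\sqrt{x'(P')-\theta_1},\sqrt{x'(P')-\theta_2},\sqrt{x'(P')-\theta_3}\bigr)$, over which the $2$-descent homomorphism applied to $nP'$ makes each $4A_n-\theta_iB_n^2$ an honest square $\varepsilon_i^2$ in $\OO_L$; there are then no twisting classes $\delta_i$ to track, hypothesis (iii) is used exactly to make $L$ totally real (not for any archimedean matching), and the single Frey curve $Y^2=X(X-z_1)(X+z_2)$ is built from the three differences $w_i=\varepsilon_i-\varepsilon_j$ satisfying $w_1+w_2+w_3=0$, after a Minkowski-type rescaling to control the gcd.

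The genuine gap is your final step. Eliminating the surviving newforms by comparing traces at auxiliary primes cannot bound $\ell$ when a candidate newform has eigenvalues that actually coincide with the traces of $F$ (the congruences then hold for every $\ell$), and the \lq\lq archimedean signature\rq\rq\ you propose as a rescue is not an argument: a parallel-weight-$2$ Hilbert newform carries no real-place datum that can be played off against the total positivity of the $\alpha_i$. What you are missing is the role of hypothesis (i), which your sketch never uses. The paper's Lemma 6 (a consequence of Silverman's primitive divisor theorem together with the strong divisibility property, and requiring $B_1>1$) produces a \emph{fixed} prime $p$, outside the bad set and independent of $n$ and $\ell$, with $p\mid B_n=u^\ell$ whenever $\ell$ is large. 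A prime $\fp\mid p$ of $\OO_L$ is then a prime of multiplicative reduction for $F$ with $\ell\mid v_\fp(\Delta_F)$, so level lowering removes $\fp$ from the level, and comparison of Frobenius traces at $\fp$ gives $\pm(\Norm_{L/\Q}(\fp)+1)\equiv a_\fp(\ff)\pmod{\lambda}$. Since $\lvert a_\fp(\ff)\rvert<2\sqrt{\Norm_{L/\Q}(\fp)}$, the quantity $\Norm_{L/\Q}(\fp)+1\mp a_\fp(\ff)$ is nonzero, so $\ell$ divides a fixed nonzero integer --- uniformly over all candidate newforms at the finitely many possible levels, with no elimination step needed. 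Without this (or an equivalent exploitation of non-integrality) your argument does not close.
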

\noindent \textbf{Remark.} 
Suppose (i) and (ii) hold, but not (iii). The assumption (ii)
implies $[E(\R): E_0(\R)]=2$. Thus $2P \in E_0(\R)$.
It follows from the theorem that the 
subsequence $(B_{2n})_{n \ge 1}$ contains only finitely many
perfect powers.

\medskip

A key innovation of our paper is 
the construction of a Frey curve
attached to the equation $B_n=u^\ell$ where
$B_n$ arises from a point of infinite order 
on a general Weierstrass model.
This Frey curve is defined over a number
field $L$ of degree at most $24$, and assumptions (ii) and (iii)
are needed to ensure that this field is totally real,
whence we may use known results on the modularity
of elliptic curves over totally real fields.
We can make do without assumptions (ii) and (iii)
if we assume the three standard conjectures,
which we state in Section~\ref{sec:conjectures},
but for now briefly indicate what they are:
\begin{enumerate}[(I)]
\item Serre's modularity conjecture
over number fields (Conjecture~\ref{conj:modularity});
\item Serre's uniformity conjecture for elliptic curves over number fields
(Conjecture~\ref{conj:uniformity});
\item the Ramanujan--Petersson conjecture for eigenforms over number fields
(Conjecture~\ref{conj:RP}).
\end{enumerate}

\begin{theorem}\label{thm:Main2}
Let $E/\Q$ be an elliptic curve given by an integral 
Weierstrass equation \eqref{eqn:model}. Let $P \in E(\Q)$
and let $(B_n)_{n \ge 1}$ be the corresponding 
elliptic divisibility sequence. Suppose $P$
is non-integral (equivalently $B_1>1$).
Assume Conjectures~\ref{conj:modularity}, \ref{conj:uniformity}
and \ref{conj:RP}.
Then there are only finitely many perfect powers in the sequence
$(B_n)_{n \ge 1}$.
\end{theorem}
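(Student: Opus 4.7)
The plan is to follow the proof of Theorem~\ref{thm:Main1} line by line, replacing each invocation of an unconditional result over totally real fields by the corresponding statement in Conjectures~\ref{conj:modularity}, \ref{conj:uniformity} and \ref{conj:RP}. Assumptions (ii) and (iii) enter the proof of Theorem~\ref{thm:Main1} only to force the field of definition $L$ of the Frey curve to be totally real; the construction of the Frey curve and the computation of its conductor make sense for any non-integral $P \in E(\Q)$, producing an elliptic curve $F_n/L$ over a number field $L/\Q$ of degree at most $24$.

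Suppose for contradiction that $B_n = u^\ell$ for some prime $\ell$ and infinitely many $n$. By Theorem~\ref{thm:silverman}, after discarding finitely many terms we may assume that $B_n$ admits a primitive prime divisor $p_n$, so that $p_n^\ell \mid B_n$. Construct the Frey curve $F_n/L$ attached to the equation $B_n = u^\ell$ exactly as in the proof of Theorem~\ref{thm:Main1}. Its conductor $\fN(F_n)$ is supported on a finite set of primes of $L$ depending only on $E$ and $P$, together with primes of $L$ above $\ell$ and primes above $p_n$; at the latter $F_n$ has multiplicative reduction with $\ell$-th-power Tate parameter.

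Conjecture~\ref{conj:modularity} attaches to $F_n$ a cuspidal automorphic eigenform over $L$ whose mod-$\ell$ representation is $\overline{\rho}_{F_n,\ell}$. For $\ell$ sufficiently large, Conjecture~\ref{conj:uniformity} ensures that $\overline{\rho}_{F_n,\ell}$ has image containing $\mathrm{SL}_2(\F_\ell)$, and so is absolutely irreducible and non-dihedral. A Ribet-style level-lowering argument then exhibits an eigenform $\mathfrak{g}_n$ over $L$ of level dividing a fixed ideal $\fN_0$ independent of $n$, satisfying $\overline{\rho}_{\mathfrak{g}_n,\ell} \simeq \overline{\rho}_{F_n,\ell}$. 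Since there are only finitely many such $\mathfrak{g}_n$, we may pass to a subsequence on which $\mathfrak{g}_n = \mathfrak{g}$ is a single fixed eigenform. Choosing an auxiliary prime $\fq$ of $L$ coprime to $\ell \fN_0$ and comparing traces of Frobenius at $\fq$ yields $\ell \mid N_\fq$, where $N_\fq$ is a nonzero integer determined by the Hecke eigenvalue of $\mathfrak{g}$ at $\fq$ and the finitely many possible values of $\Tr(\overline{\rho}_{F_n,\ell}(\Frob_\fq))$. The nonvanishing of $N_\fq$ requires the Ramanujan--Petersson bound for $\mathfrak{g}$ supplied by Conjecture~\ref{conj:RP}. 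Taking $\ell$ larger than $|N_\fq|$ contradicts $\ell \mid N_\fq$.

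The main obstacle is the level-lowering step: over a general (not necessarily totally real) number field, unconditional analogues of the theorems of Ribet and Mazur are not available, since the standard proofs rely on Jacquet--Langlands, base change and potential modularity methods that are genuinely restricted to the totally real setting. Conjecture~\ref{conj:modularity} bypasses this by providing modularity of the level-lowered representation directly, but care is needed to verify that the local conditions at the bad primes of $F_n$ match those of $\mathfrak{g}$. A secondary point is the uniformity in Conjecture~\ref{conj:uniformity}: one must ensure that a single bound $\ell_0$ works simultaneously for the entire Frey family $(F_n)_{n \ge 1}$, which follows from the fact that all $F_n$ lie over the fixed base field $L$ with uniformly bounded conductor support outside $p_n$. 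Once these are in hand, the rest of the argument is the same bookkeeping as in the totally real case.
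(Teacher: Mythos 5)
Your overall strategy --- rerunning the proof of Theorem~\ref{thm:Main1} with Conjecture~\ref{conj:modularity} playing the combined role of Theorems~\ref{thm:modularity} and~\ref{thm:ll} (the Serre conductor being the already-lowered level), Conjecture~\ref{conj:uniformity} replacing Theorem~\ref{thm:irred}, and Conjecture~\ref{conj:RP} supplying the eigenvalue bound --- is exactly the paper's. But there is a genuine gap in your endgame, and it sits precisely where the hypothesis $B_1>1$ is supposed to enter. You take $p_n$ to be a \emph{primitive} divisor of $B_n$ furnished by Theorem~\ref{thm:silverman}; this prime varies with $n$ and is unbounded. The congruence you would extract at a prime $\fq_n$ of $L$ above $p_n$ has the shape $\pm(\Norm_{L/\Q}(\fq_n)+1)\equiv a_{\fq_n}(\mathfrak{g})\pmod{\lambda}$, and while Conjecture~\ref{conj:RP} makes the relevant norm nonzero, the bound on $\ell$ it yields is of size roughly $\Norm_{L/\Q}(\fq_n)$, which grows with $n$: no uniform bound on $\ell$ results. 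The paper instead uses Lemma~\ref{lem:prime}: because $B_1>1$, there is a single \emph{fixed} prime $p$ outside the bad set $T$ with $p\mid B_n$ for every solution with $\ell>\kappa$; the Frey curve then has multiplicative reduction at a fixed $\fp\mid p$ with $\ell\mid v_\fp(\Delta_F)$, so $\fp$ drops out of the Serre conductor, and the congruence at this one fixed prime bounds $\ell$ in terms of $E$ and $P$ only. Your sketch never uses $B_1>1$, which is the red flag: the paper's final remarks stress that non-integrality is exactly what produces a fixed multiplicative prime independent of $\ell$.

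Relatedly, your final step compares Frobenius traces at ``an auxiliary prime $\fq$ coprime to $\ell\fN_0$'' and asserts the resulting integer $N_\fq$ is nonzero by Ramanujan--Petersson. For a generic such $\fq$ (good reduction for $F_n$) the congruence is $a_\fq(F_n)\equiv a_\fq(\mathfrak{g})\pmod{\lambda}$ and there is no reason for the difference to be nonzero --- for instance if $\mathfrak{g}$ corresponds to an elliptic curve with the same trace at $\fq$. The nonvanishing argument works only at a prime where $F_n$ is multiplicative but which does not divide the level of $\mathfrak{g}$, where the quantity is $\Norm_{L/\Q}(\fq)+1\mp a_\fq(\mathfrak{g})\ge(\sqrt{\Norm_{L/\Q}(\fq)}-1)^2>0$; so the argument must be routed through the fixed prime $\fp$ of Lemma~\ref{lem:prime} and Proposition~\ref{prop:prelim}(b). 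Smaller points: your opening reductio (fixed $\ell$, infinitely many $n$) is already handled unconditionally by Theorem~\ref{thm:ERS}, so what you actually need is a bound on $\ell$ uniform over all solutions; you need $F_n$ non-CM to apply Conjecture~\ref{conj:uniformity} (true, since $F_n$ is multiplicative at $\fp$, but it should be said); and to invoke Conjecture~\ref{conj:RP}, which concerns complex eigenforms, you must first lift the mod $\ell$ eigenform produced by Conjecture~\ref{conj:modularity} to characteristic zero without raising the level --- the paper does this via Ash--Stevens for $\ell$ sufficiently large, a step absent from your sketch.
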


The paper is organised as follows. In Section~\ref{sec:background}
we recall standard properties of elliptic divisibility
sequences, and we generalise
a result of Alfaraj that is in turn a consequence
of Silverman's primitive divisor theorem. In Section~\ref{sec:prelim}
we state three theorems concerning Galois representations
attached to elliptic curves over totally real fields.
These theorems play the role, in our setting, that is played
by the theorems of Mazur, Ribet and Wiles in the proof of Fermat's 
Last Theorem. In Section~\ref{sec:key} we state a proposition
(Proposition~\ref{prop:prelim})
that asserts the existence of a suitable Frey curve,
defined over a number field $L$ of degree at most $24$,
attached to a solution of $B_n=u^\ell$;
hypotheses (ii) and (iii) of Theorem~\ref{thm:Main1}
ensure that $L$ is totally real. We also show
that Theorem~\ref{thm:Main1} follows from Proposition~\ref{prop:prelim},
using the three theorems. Proposition~\ref{prop:prelim}
is proved in Section~\ref{sec:prop}. Finally, in Section~\ref{sec:conjectures}
we give precise statements of the three conjectures
above, and we explain how they allow us, together with
relatively minor modifications of earlier arguments,
to deduce Theorem~\ref{thm:Main2}.


\section{Background}
\label{sec:background}

The proposition below recalls a standard property of elliptic divisibility sequences. 
\begin{proposition}{\label{divprop}}
Let $( B_n)_{n \ge 1}$ be an elliptic divisibility sequence generated by a non-torsion point on 
an elliptic curve $E/\Q$ given by an integral Weierstrass model.
\begin{enumerate}[(i)]
\item 
Let $p$ be a prime, and $n$, $m \in \mathbb{N}$. Suppose $v_p(B_n)>0$. Then
\begin{equation}\label{eqn:val}
    v_p(B_{nm})=v_p(B_n)+v_p(m)+ O(1)
\end{equation}
 where the $O(1)$ constant does not depend on $m$ and $n$.
\item $(B_n)_{n \ge 1}$ is a strong divisibility sequence, i.e., for all $n$, $m \in \mathbb{N}$,
\begin{equation}\label{eqn:val2}
 \gcd(B_m,B_n)=B_{\gcd(m,n)}.
\end{equation}
\end{enumerate}
\begin{proof}
These are consequences
of the theory of formal groups of elliptic curves.
For proofs, see Lemma IV.4.3 and Lemma IV.3.1 of \cite{Streng}.
\end{proof}
\end{proposition}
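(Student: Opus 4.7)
The plan is to reduce both parts to standard formal-group computations on $E/\Q_p$. Choose the local parameter $t = -x/y$ at the identity of $E$. Substituting $nP = (A_n/B_n^2,\, C_n/B_n^3)$ gives $t(nP) = -A_n B_n/C_n$, and since $\gcd(B_n, A_n C_n) = 1$, any prime $p$ with $v_p(B_n) > 0$ satisfies $v_p(t(nP)) = v_p(B_n)$, placing $nP$ in the kernel of reduction $\hat{E}(p\Z_p) \subset E(\Q_p)$.

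For part~(i), I would use that the multiplication-by-$m$ endomorphism of the formal group is a power series
\[
    [m](T) \;=\; m\,T + \sum_{i \ge 2} c_i T^i, \qquad c_i \in \Z.
\]
For primes $p$ for which the formal logarithm converges on $\hat E(p\Z_p)$, standard theory gives $v_p([m](t_0)) = v_p(m) + v_p(t_0)$ exactly for any $t_0 \in p\Z_p$; applied with $t_0 = t(nP)$ this yields $v_p(B_{mn}) = v_p(B_n) + v_p(m)$ on the nose. For the finitely many remaining small primes, one restricts to a deeper filtration subgroup $\hat E(p^{k_p}\Z_p)$ on which the logarithm does converge. Replacing the base point $nP$ by a suitable integer multiple to enter this deeper filtration costs only an additive constant (depending on $p$ and on the Weierstrass coefficients of $E$, but not on $m$ or $n$), and this is precisely the $O(1)$ appearing in \eqref{eqn:val}.

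For part~(ii), the key input is that $\hat E(p\Z_p)$ is a subgroup of $E(\Q_p)$ for every $p$. One direction of \eqref{eqn:val2} is the elementary divisibility property $B_d \mid B_m$, $B_d \mid B_n$ (where $d = \gcd(m,n)$), which gives $B_d \mid \gcd(B_m, B_n)$. For the reverse direction, writing $d = am + bn$ by Bezout, any prime $p \mid \gcd(B_m, B_n)$ places both $mP$ and $nP$ in $\hat E(p\Z_p)$, so $dP = a(mP)+b(nP) \in \hat E(p\Z_p)$, i.e.\ $p \mid B_d$; this shows $B_d$ and $\gcd(B_m, B_n)$ have the same radical. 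To promote to equality of $p$-adic valuations, one invokes the refined formal-group formula $v_p(B_n) = v_p(B_{r_p}) + v_p(n/r_p)$, where $r_p$ is the smallest positive integer with $p \mid B_{r_p}$, which is valid for all $n$ divisible by $r_p$. Combined with $v_p(\gcd(m,n)/r_p) = \min(v_p(m/r_p),\, v_p(n/r_p))$, this gives exactly \eqref{eqn:val2}.

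The principal obstacle is handling the finitely many small primes where the formal logarithm fails to be defined on all of $\hat E(p\Z_p)$; here one must instead pass to a deeper filtration subgroup, and the cost of this passage is what forces the $O(1)$ error in part~(i) rather than an exact equality. For part~(ii), the subtler half is upgrading equality of supports to equality of valuations, but once the filtration structure of $E(\Q_p)$ is under control this reduces to the linearity of $v_p$ under the formal-group isomorphism with $\hat{\G}_a$ on a suitable subgroup.
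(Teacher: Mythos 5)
The paper proves this proposition by citation (Lemmas IV.3.1 and IV.4.3 of Streng's thesis, which are formal-group arguments), so your formal-group sketch is the "same approach" in substance; part (i) of your argument is essentially right, though the one point that actually carries content there --- that the cost of pushing the varying base point $nP$ into the deeper filtration $\hat{E}(p^{k_p}\Z_p)$ is bounded \emph{uniformly in $n$} --- is asserted rather than proved. (It is true: if $v_2(B_n)=1$ then $n$ is an odd multiple of the rank of apparition $r_{2,1}$, and multiplication by odd integers preserves valuations in the formal group, so $v_2(B_{2n})=v_2(B_{2r_{2,1}})$ is a single constant; but as written your sketch is close to circular, since the "cost" is exactly the quantity being estimated.)

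The genuine gap is in part (ii). You derive strong divisibility from the \emph{exact} formula $v_p(B_n)=v_p(B_{r_p})+v_p(n/r_p)$, but this formula is precisely what fails in the case $p=2$ with $a_1$ odd --- the very case that forces the $O(1)$ you conceded in part (i). (The paper's remark after the proposition makes this explicit: the error term vanishes only when $p\ne 2$ or $a_1$ is even.) So your proof of (ii) is internally inconsistent with your proof of (i) and has a hole at $p=2$. The statement is nonetheless true, and the standard repair avoids exact valuation growth entirely: for each $j\ge 1$ the set $\{k\in\Z : v_p(B_k)\ge j\}$ equals $\{k : kP\in \hat{E}(p^j\Z_p)\}$, which is a subgroup $r_{p,j}\Z$ of $\Z$. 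Hence $v_p(B_{\gcd(m,n)})\ge j$ iff $r_{p,j}\mid\gcd(m,n)$ iff $r_{p,j}$ divides both $m$ and $n$ iff $\min(v_p(B_m),v_p(B_n))\ge j$, which gives $v_p(B_{\gcd(m,n)})=v_p(\gcd(B_m,B_n))$ for every $p$, with no case distinction. You should replace the "refined formula" step by this filtration argument.
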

We point out \cite[Corollary 4.5]{Streng} that \eqref{eqn:val} can be simplified
to $v_p(B_{nm})=v_p(B_n)+v_p(m)$ if $p \ne 2$, or 
if $p=2$ but $a_1$ is even, where $a_1$ is the coefficient of $xy$ in the Weierstrass model
\eqref{eqn:model}.

The following lemma is a crucial step in proving Theorem \ref{thm:Main1}. 
It is a generalisation of Alfaraj's \cite[Lemma~3.4]{Alfaraj2}
which copes with the case where $a_1$ is odd.

\begin{lemma}\label{lem:prime}
Let $(B_n)_{n \ge 1}$ be an elliptic divisibility sequence generated by a
non-torsion point on some elliptic curve over $\Q$ given by an
integral
Weierstrass model. Suppose that $B_1>1$.
Let $T$ be a finite set of primes. There exists a positive integer $\kappa$
and a prime $p\notin T$ such that if $B_n$ is an $\ell$-th power for some 
$\ell>\kappa$,
then $p\mid B_n$.  
\end{lemma}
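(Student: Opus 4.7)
The plan is to split into cases by whether $B_1$ contains a prime outside $T$. If it does, then any such prime $p$ works (with any $\kappa$), since $B_1\mid B_n$ forces $p\mid B_n$ for all $n$. So from now on I assume every prime factor of $B_1$ lies in $T$; call these $p_1,\dots,p_k$, a nonempty list because $B_1>1$.

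The key observation is that the hypothesis $B_n=u^\ell$ compels $n$ to be divisible by large powers of each $p_i$. Since $p_i\mid B_1\mid B_n$ and $\ell\mid v_{p_i}(B_n)$, one has $v_{p_i}(B_n)\ge\ell$. Feeding this into Proposition~\ref{divprop}(i), which gives a constant $C_i$ with
\[
v_{p_i}(B_n)\ \le\ v_{p_i}(B_1)+v_{p_i}(n)+C_i,
\]
produces the lower bound $v_{p_i}(n)\ge\ell-v_{p_i}(B_1)-C_i$, valid for every $i$.

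Next I would choose $M=p_1^{s_1}\cdots p_k^{s_k}$ with each $s_i$ taken large enough to satisfy two finite-exclusion conditions: (a) $M$ lies outside the finite exceptional set of Silverman's primitive divisor theorem (Theorem~\ref{thm:silverman}), so $B_M$ has a primitive divisor $p$; and (b) $M$ differs from the rank of apparition of every prime in $T\setminus\{p_1,\dots,p_k\}$. Since $T$ is finite and both exclusions involve only finitely many integers, such $M$ certainly exist. Any primitive divisor $p$ of $B_M$ then satisfies $p\nmid B_1$, whence $p\notin\{p_1,\dots,p_k\}$; and the rank of apparition of $p$ equals $M$, which by construction coincides with no rank of a prime in $T\setminus\{p_1,\dots,p_k\}$. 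Hence $p\notin T$.

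To finish, set $\kappa:=\max_i(s_i+v_{p_i}(B_1)+C_i)$. For any $\ell>\kappa$ with $B_n=u^\ell$, the second paragraph yields $v_{p_i}(n)\ge s_i$ for all $i$, so $M\mid n$, hence $B_M\mid B_n$ by the strong divisibility property (Proposition~\ref{divprop}(ii)), and finally $p\mid B_n$. The main conceptual step is the synthesis of Silverman's primitive divisor theorem with the valuation estimate to extract a \emph{single} prime $p\notin T$ that works uniformly in $\ell$; the remaining choices are finite-constraint bookkeeping.
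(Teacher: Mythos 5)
Your proof is correct and follows essentially the same strategy as the paper's: use the $\ell$-th power condition together with the valuation formula of Proposition~\ref{divprop}(i) to force $n$ to be divisible by a large fixed integer $M$ supported on primes of $B_1$, then apply Silverman's theorem to extract a primitive divisor of $B_M$ lying outside $T$. The only differences are cosmetic: the paper avoids your case split and the product over all $p_i$ by working with a single prime $q\mid B_1$ (whether or not $q\in T$) and taking $M=q^s$, and it absorbs your rank-of-apparition exclusion into the choice of the threshold $\kappa'$.
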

\begin{proof}
Let $q \mid B_1$ be a prime.
Suppose $B_n$ is an $\ell$-th power.
By \eqref{eqn:val2}, we have $q\mid B_n$, and hence $\ell \leq v_q(B_n)$.
From \eqref{eqn:val}, we have
\[
	\ell \; \leq  \; v_q(B_1)+v_q(n)+r,
\]
for some integer $r$, depending only on the elliptic
divisibility sequence.  Hence, 
\begin{equation}\label{eqn:ndiv}
	v_q(n) \; \geq \; \ell -v_q(B_1)-r.
\end{equation}
By Silvermans's primitive divisor theorem (Theorem \ref{thm:silverman}), 
there exists $\kappa^\prime$ such that if $m\geq \kappa^\prime$, 
then $B_m$ is divisible
by some prime $p \notin T$.  Let $\kappa$ be an integer such that
that $\kappa>v_q(B_1)+r$ and 
$q^{\kappa-v_q(B_1)-r}>\kappa^\prime$. 
Then, there is some prime $p \notin T$ such that 
\[
		p\mid B_{q^{\kappa-v_q(B_1)-r}}.
\]
If $\ell > \kappa$, then
\[
	q^{\kappa-v_q(B_1)-r}\mid q^{\ell-v_q(B_1)-r}\mid n
\]
by \eqref{eqn:ndiv}.
So, 
\[
	p\mid B_{q^{\kappa-v_q(B_1)-r}}\mid B_n.
\]
\end{proof}

\section{Preliminaries from Modularity and Galois Representations of Elliptic Curves}\label{sec:prelim}
We prove Theorem~\ref{thm:Main1} by attaching a Frey elliptic curve, defined over a totally
real field, to a term of the elliptic divisibility sequence that is a perfect power,
and applying the so-called \lq\lq modular method\rq\rq\ that is inspired by the 
Wiles' proof of Fermat's Last Theorem;
introductions to the method can be found in \cite{EkinSurvey} and \cite{SiksekModular}. 
The proof of Fermat's Last Theorem
rests on three major pillars that we now recall.
\begin{itemize}
	\item Modularity of semistable elliptic curves over the rationals due to Wiles \cite{Wiles}.
\item The irreducibility of Galois representations attached
to the $\ell$-torsion of elliptic curves over the rationals, 
for sufficiently large primes $\ell$,
due to Mazur \cite{Mazur78}.
\item Ribet's Level Lowering Theorem \cite{Ribet-1990}.		
\end{itemize}
We shall need (somewhat weak) generalisations of such results to the context
of elliptic curves over totally real fields.
In particular, we need the following theorem proved in \cite[Theorem 5]{SamiretalModular}.
\begin{theorem}[Freitas, Le Hung and Siksek]\label{thm:modularity}
Let $L$ be a totally real field. There are at most
finitely many $\overline{L}$-isomorphism
classes of elliptic curves over $L$ that are non-modular.
\end{theorem}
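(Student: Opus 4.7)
The plan is to combine the modularity lifting theorems available over totally real fields with Faltings' theorem applied to a finite list of modular curves. The underlying strategy is the one pioneered by Wiles over $\Q$ and adapted to the totally real setting: an elliptic curve $E/L$ is modular whenever, for some prime $p$, the residual representation $\overline{\rho}_{E,p}$ is modular and remains absolutely irreducible on restriction to $G_{L(\zeta_p)}$, provided mild local hypotheses at primes above $p$ are satisfied. The task is thus to control those $E/L$ for which this irreducibility hypothesis fails at every prime $p\in\{3,5,7\}$ simultaneously.

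For $p=3$, residual modularity in the irreducible case follows from Langlands--Tunnell together with solvable base change. For $p=5$ and $p=7$, residual modularity can be bootstrapped by Wiles-type auxiliary arguments (the $3$--$5$ and $3$--$7$ switches), which produce a companion elliptic curve whose mod-$p$ representation is isomorphic to that of $E$ but whose mod-$3$ representation is irreducible. Consequently, if $E/L$ is non-modular, then for each $p\in\{3,5,7\}$ the image of $\overline{\rho}_{E,p}|_{G_{L(\zeta_p)}}$ must be of exceptional type: reducible, contained in the normaliser of a Cartan subgroup, or with projective image equal to $A_4$, $S_4$ or $A_5$.

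I would then translate these simultaneous image constraints into the statement that $j(E)$ defines an $L$-rational point on one of a finite collection of modular curves parametrising elliptic curves with prescribed Galois image at $3$, $5$ and $7$; concretely these are fibre products, over the $j$-line, of modular curves of the form $X_0(p)$, $X_{\mathrm{split}}(p)$ and $X_{\mathrm{ns}}(p)$ for $p\in\{3,5,7\}$, together with a small number of exceptional modular curves accounting for the $A_4$, $S_4$, $A_5$ cases. Provided each such curve has genus at least two over $L$, Faltings' theorem gives only finitely many $L$-points, hence finitely many $j$-invariants for non-modular curves. Since the $j$-invariant determines the $\overline{L}$-isomorphism class, the theorem follows. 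The principal obstacle is the combinatorial bookkeeping of enumerating all compatible triples of exceptional image types at $3$, $5$ and $7$ and verifying the genus bound in each configuration; once this case analysis is complete, Faltings' theorem finishes the argument uniformly.
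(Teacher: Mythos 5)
This theorem is not proved in the paper at all: it is quoted verbatim from Freitas--Le~Hung--Siksek \cite[Theorem 5]{SamiretalModular}, and your outline is essentially the argument given there --- modularity lifting at $p=3,5,7$ (via Langlands--Tunnell and the $3$--$5$, $3$--$7$ switches) reduces non-modularity to simultaneously small image at all three primes, which places $j(E)$ on one of finitely many modular curves (fibre products over the $j$-line of $X_0(p)$, $X_{\mathrm{split}}(p)$, $X_{\mathrm{ns}}(p)$), each of genus at least $2$, so Faltings finishes. The work you defer --- the genus verification for the twenty-seven fibre products and the disposal of the exceptional projective images $A_4$, $S_4$, $A_5$ at $5$ and $7$ --- is precisely the substantive content of the cited proof, so your sketch is faithful to it.
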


The following theorem, stated in \cite[Theorem 7]{SamirSummary}, is a summary of level lowering theorems
due to Fujiwara, Jarvis and Rajaei, specialised to the context of
modular elliptic curves.
\begin{theorem}\label{thm:ll}
Let $L$ be a totally real number field, 
and $F/L$ an elliptic curve of conductor $\cN$. Let $\ell$ 
be a rational prime, and write $\overline{\rho}_{F,\ell}$
for the mod $\ell$ representation attached to $F$. 
For a prime ideal $\fq$ of $\OO_L$,
let $\Delta_{\fq}$ be
the discriminant of a local minimal model for $F$ at $\fq$. Let
\begin{equation}\label{eqn:Nl}
	\mathcal{M}:=\prod_{\substack{\fq \mid\mid\cN\\ \ell \mid v_{\fq}(\Delta_{\fq})}} \fq,
\qquad
\fN:=\frac{\cN}{\mathcal{M}}.
\end{equation}
Suppose the following
\begin{enumerate}
    \item[(i)] $\ell \geq 5$, the ramification index $e(\mathfrak{l}/\ell)<\ell-1$ for all $\mathfrak{l}\mid \ell$, and $\mathbb{Q}(\zeta_\ell)^{+}\not\subseteq L$;
    \item[(ii)]  $F$ is modular;
    \item[(iii)] $\overline{\rho}_{F,\ell}$ is irreducible;
    \item[(iv)]  $F$ is semistable at all $\mathfrak{l}\mid \ell$;
    \item[(v)] $\ell\mid v_{\mathfrak{l}}(\Delta_{\mathfrak{l}})$ for all $\mathfrak{l}\mid \ell$.
\end{enumerate}
Then, there is a Hilbert eigenform $\ff$ of parallel weight $2$ 
that is new at level $\fN$ and some prime ideal $\lambda$ of the ring of integers of $\mathbb{Q}_{\ff}$ such that $\lambda\mid \ell$ and
$\overline{\rho}_{F,\ell}\sim \overline{\rho}_{\ff,\lambda}$.
\end{theorem}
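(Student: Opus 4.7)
The plan is to produce the desired Hilbert eigenform $\ff$ at the lowered level $\fN$ by first invoking modularity of $F$ to obtain an eigenform at the full conductor level $\cN$, and then stripping the primes dividing $\mathcal{M}$ one at a time by means of the Hilbert modular analogue of Ribet's level lowering theorem.

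First, by hypothesis (ii), $F$ is modular, so there exists a Hilbert eigenform $\mathfrak{g}$ of parallel weight $2$ and level $\cN$, and a prime $\lambda$ above $\ell$ of the ring of integers of $\Q_{\mathfrak{g}}$, satisfying $\overline{\rho}_{F,\ell} \sim \overline{\rho}_{\mathfrak{g},\lambda}$. Next I would verify that $\overline{\rho}_{F,\ell}$ is unramified at every prime $\fq \mid \mathcal{M}$. By construction each such $\fq$ divides $\cN$ exactly once, so $F$ has multiplicative reduction at $\fq$ and the Tate uniformisation is available. A standard computation then shows that inertia at $\fq$ acts on $F[\ell]$ via a unipotent matrix whose off-diagonal entry is proportional to $v_\fq(\Delta_\fq) \pmod \ell$, and this vanishes precisely because $\ell \mid v_\fq(\Delta_\fq)$.

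The main step is the application of the level lowering theorems of Fujiwara, Jarvis and Rajaei, invoked iteratively to remove each $\fq \mid \mathcal{M}$ from the level of $\mathfrak{g}$ while preserving both the mod $\ell$ representation and the parallel weight $2$. Hypothesis (iii) provides the irreducibility that is needed to identify the level-lowered eigenform uniquely. Hypotheses (iv) and (v) together guarantee that $F[\ell]$ extends to a finite flat group scheme at every prime $\fl \mid \ell$, which is what permits the Serre weights above $\ell$ to descend to $2$ so that the resulting form is genuinely of parallel weight $2$. The arithmetic conditions in (i), namely $\ell\geq 5$, $e(\fl/\ell)<\ell-1$, and $\Q(\zeta_\ell)^{+} \not\subseteq L$, are precisely those required to rule out the exceptional configurations in the mod $\ell$ Langlands correspondence over totally real fields (for instance, they ensure the availability of suitable auxiliary primes in the Khare--Wintenberger style constructions).

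The principal obstacle is this level-lowering step itself: one must verify carefully, at each stage of the iteration, that the technical hypotheses of the Fujiwara/Jarvis/Rajaei theorems remain in force, with particular attention to the local deformation conditions at the primes above $\ell$ so that the parallel weight $2$ condition is preserved all the way through the descent to level $\fN$.
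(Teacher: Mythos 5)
The paper does not prove this theorem: it is imported verbatim as a black box, being \cite[Theorem 7]{SamirSummary}, which itself is a packaging of the level-lowering theorems of Fujiwara, Jarvis and Rajaei specialised to modular elliptic curves. Your outline is the standard derivation found in those sources (modularity gives an eigenform at level $\cN$; Tate uniformisation shows $\overline{\rho}_{F,\ell}$ is unramified, resp.\ finite flat, at the primes dividing $\mathcal{M}$; Mazur's principle and the Ribet-type theorems over totally real fields then strip those primes from the level), so in that sense you are reproducing the right argument rather than proposing a different one. Two small points of precision. First, a prime $\fq \mid \mathcal{M}$ may lie above $\ell$, in which case the relevant local condition is not unramifiedness but finite flatness of $F[\ell]$ at $\fq$ (which is where (iv), (v) and the bound $e(\fl/\ell)<\ell-1$ enter, the latter guaranteeing uniqueness of the finite flat model by Raynaud); your sketch conflates these slightly by asserting unramifiedness at \emph{every} $\fq \mid \mathcal{M}$. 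Second, irreducibility of $\overline{\rho}_{F,\ell}$ is not used to ``identify the level-lowered eigenform uniquely'' but is a hypothesis of the level-lowering theorems themselves (without it the residual representation can be Eisenstein and the geometric arguments behind Mazur's principle fail); relatedly, the condition $\Q(\zeta_\ell)^+\not\subseteq L$ is the hypothesis appearing in Jarvis's Mazur's principle for totally real fields, not an artefact of Khare--Wintenberger-style auxiliary-prime constructions. Since the entire substance lives in the cited level-lowering theorems, your proposal, like the paper, ultimately rests on quoting them.
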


\begin{remark}{\label{remark}}
Following the notation of Theorem \ref{thm:ll}, suppose that
$\overline{\rho}_{F,\ell}\sim \overline{\rho}_{\ff,\lambda}$. Let
$\fq$ be a prime ideal of $\OO_L$.  
By comparing the
traces of the images of Frobenius at $\fq$ in
$\overline{\rho}_{F,\ell}$ and $\overline{\rho}_{\ff,\lambda}$ we have the
following.
\begin{enumerate}[(a)]
\item If $\fq \nmid \ell\cN$, then $a_{\fq}(F)\equiv a_{\fq}(\ff) \pmod{\lambda}$.
\item If $\fq \nmid \ell\fN$, and $\fq \mid\mid \cN$, then $\pm(\Norm_{L/\mathbb{Q}}(\fq)+1)\equiv  a_{\fq}(\ff) \pmod{\lambda}$
\end{enumerate}
\end{remark}

The following result by Freitas and Siksek in \cite{siksekbound}, shows that,
under certain assumptions, hypothesis (iii) in Theorem \ref{thm:ll} 
is satisfied
whenever $\ell$ is sufficiently large.
This theorem makes essential use of Merel's celebrated
uniform boundedness theorem \cite{Merel}.
\begin{theorem}\label{thm:irred}
Let $L$ be a Galois totally real field. There is an effective constant $C_L$,
depending only on $L$, such that the following holds. If $\ell> C_L$ is
prime, and $F$ is an elliptic curve over $L$
which is semistable at all $\mathfrak{l}\mid \ell$, then 
$\overline{\rho}_{F,\ell}$ 
is irreducible.
\end{theorem}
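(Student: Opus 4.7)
The plan is a proof by contradiction. Assume $\overline{\rho}_{F,\ell}$ is reducible; then, up to semisimplification, it has the form $\theta\oplus\chi_\ell\theta^{-1}$ for some continuous character $\theta\colon G_L\to\F_\ell^\times$, where $\chi_\ell$ denotes the mod $\ell$ cyclotomic character. The goal is to show that, for $\ell$ larger than an effective constant depending only on $L$, one must have $\theta=\chi_\ell^a$ for some $a\in\{0,1\}$, possibly after a base change of bounded degree. Either possibility produces a point of order $\ell$ on an elliptic curve (namely $F$ itself if $a=0$, or the $\ell$-isogenous curve $F/\ker(\theta)$ if $a=1$) defined over a number field of degree bounded in terms of $[L:\Q]$, and Merel's uniform boundedness theorem then furnishes the sought constant $C_L$ and a contradiction.

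The first step is to control the ramification of $\theta$ place by place using standard facts about $\ell$-adic representations of elliptic curves. At primes $\fq\nmid\ell$ where $F$ has good or multiplicative reduction, inertia acts unipotently on $F[\ell]$ up to a quadratic twist, so $\theta^{2}$ is unramified at $\fq$. At primes $\fq\nmid\ell$ where $F$ has additive reduction, the image of inertia in $\GL_2(\F_\ell)$ has order dividing $24$ (for $\ell\geq 5$), hence $\theta^{24}$ is unramified at $\fq$. At primes $\fl\mid\ell$, the semistability hypothesis rules out supersingular reduction (which would force $\overline{\rho}_{F,\ell}|_{G_\fl}$ to be irreducible, contradicting global reducibility), and the theory of ordinary $\ell$-adic representations together with the Tate-curve parametrisation in the multiplicative case gives $\theta|_{I_\fl}\in\{1,\chi_\ell|_{I_\fl}\}$.

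The second step is a class field theory globalisation. One picks $a\in\{0,1\}$ so that $\theta|_{I_\fl}=\chi_\ell^{a}|_{I_\fl}$ uniformly across all $\fl\mid\ell$; the Galois hypothesis on $L$ is crucial here, as it forces the local inertial types to be constant on the $\Gal(L/\Q)$-orbit of primes above $\ell$, so that the local data assemble into one global exponent. The character $\psi:=\theta^{24}\chi_\ell^{-24a}$ is then everywhere unramified and factors through a finite quotient of the idele class group of $L$ whose order is bounded by an explicit quantity $B(L)$ built from $\#\Cl(L)$, the unit index, and the infinite places. Since $\psi$ takes values in $\F_\ell^\times$, its order also divides $\ell-1$; so once $\ell-1>24\,B(L)$ we obtain $\psi=1$, whence $\theta^{24}=\chi_\ell^{24a}$. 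Extracting the $24$-th root at the cost of a base change $K/L$ of degree dividing $24$ yields $\theta|_{G_K}=\chi_\ell^{a}|_{G_K}$ and hence the desired $K$-rational point of order $\ell$, with $[K:\Q]\leq 24[L:\Q]$.

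The hard part is the globalisation step: coordinating the local inertial data at the primes above $\ell$ into one global twist, and bounding the finite-order character $\psi$ independently of $\ell$ by class field theory. This is precisely where the Galois hypothesis on $L$ enters, since it makes the inertial behaviour coherent across the primes above $\ell$ and forces the bound $B(L)$ to depend only on $L$. Once this is done, Merel's uniform boundedness theorem immediately converts the bounded-degree torsion point into the effective constant $C_L$, completing the proof.
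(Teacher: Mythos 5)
The paper does not prove this statement itself: it is quoted verbatim from Freitas--Siksek \cite{siksekbound}, whose proof has the same architecture as yours (isogeny character, ramification analysis, class field theory, Merel). So the overall plan is the right one. However, your outline has a genuine gap exactly at the step you label ``globalisation''.

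You assert that because $L/\Q$ is Galois, ``the local inertial types [are] constant on the $\Gal(L/\Q)$-orbit of primes above $\ell$'', so that a single exponent $a\in\{0,1\}$ works for all $\fl\mid\ell$. This does not follow. The character $\theta$ is a character of $G_L$, not of $G_\Q$; conjugating by a lift of $\sigma\in\Gal(L/\Q)$ replaces $\theta$ by $\theta^{\sigma}$, which is an isogeny character of the conjugate curve $F^{\sigma}$ and, even when it is related back to $F$, may correspond to the \emph{other} diagonal character $\chi_\ell\theta^{-1}$. Concretely, at two ordinary primes $\fl_1,\fl_2\mid\ell$ the stable line defining $\theta$ can meet the local connected--\'etale filtration differently, giving $\theta|_{I_{\fl_1}}=1$ but $\theta|_{I_{\fl_2}}=\chi_\ell|_{I_{\fl_2}}$; nothing in the Galois hypothesis rules this out. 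Making the exponents coherent is precisely the hard part of the Freitas--Siksek argument (following Momose, David and Kraus): one evaluates the character $\theta^{12}$ on id\`eles attached to totally positive units (or to generators of small powers of the primes above $\ell$), obtaining congruences of the shape $\prod_{\sigma}\sigma(u)^{12 s_{\sigma}}\equiv 1 \pmod{\fl_0}$, and shows that if the exponents $s_{\sigma}$ are not all equal then $\ell$ divides the norm of a fixed nonzero algebraic number, which bounds $\ell$ effectively. This is where both the Galois and the totally real hypotheses are genuinely used (the latter to make $\theta^{12}$ even and to have enough units). Your proposal skips this argument entirely, replacing it with an incorrect appeal to Galois-equivariance. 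A secondary, minor slip: from $\mathrm{ord}(\psi)\mid B(L)$ and $\mathrm{ord}(\psi)\mid \ell-1$ one cannot conclude $\psi=1$ merely from $\ell-1>24\,B(L)$; but this is harmless, since one may instead pass to the fixed field of $\ker\psi$, of degree at most $B(L)$, before invoking Merel.
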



\section{A Key Proposition and the Proof of Theorem \ref{thm:Main1}}
\label{sec:key}

In this section, we state a key result towards
the proof of Theorem~\ref{thm:Main1}. This is Proposition~\ref{prop:prelim}
below, which in essence asserts the existence
of a suitable Frey curve for the perfect powers problem
for elliptic divisibility sequences.
We also show that this proposition implies Theorem~\ref{thm:Main1},
making use of the theorems in the previous section.
We complete the proof of Theorem~\ref{thm:Main1} in 
Section~\ref{sec:prop} by proving Proposition~\ref{prop:prelim}.

\medskip

Let $E$ be an elliptic curve given by an integral
Weierstrass model \eqref{eqn:model}. Let $P \in E(\Q)$.
We suppose that $E$ and $P$ satisfy hypotheses
(i), (ii) and (iii) of Theorem~\ref{thm:Main1}.
Let $(B_n)_{n \ge 1}$
be elliptic divisibility sequence corresponding to $P$.
Thus we may write 
\begin{equation}\label{eqn:divseq}
nP=(A_n/B_n^2,C_n/B_n^3), 
\end{equation}
where
\begin{equation}\label{eqn:cond}
A_n,B_n,C_n \in \Z, \qquad
B_n \ge 1, \qquad 
\gcd(A_n C_n,B_n)=1.
\end{equation}
Moreover, as $P$ is non-integral, we have that $B_1>1$.

We shall find it convenient to work with a \lq\lq short Weierstrass model\rq\rq\
which we denote by $E^\prime$,
\begin{equation}\label{eqn:Eprime}
	E^\prime \; : \; {y^\prime}^2 = {x^\prime}^3+a {x^\prime}^2 +b {x^\prime} +c
\end{equation}
where
\begin{equation}\label{eqn:sub1}
a=4a_2+a_1^2,\quad b=8(2a_4+a_1 a_3), \quad c=16(a_3^2+4 a_6).
\end{equation}
We observe that the two models are related by the transformation,
\begin{equation}\label{eqn:sub2}
	x^\prime=4x, \qquad y^\prime=4(2y+a_1x+a_3), 
\end{equation}
and
we shall write $\theta_1$, $\theta_2$, $\theta_3$ for the roots
of the polynomial ${x^\prime}^3+a {x^\prime}^2 +b {x^\prime} +c$.
We let $P^\prime$ be the point corresponding to $P$
on the model $E^\prime$.
Let
\begin{equation}\label{eqn:KL}
	K=\Q(\theta_1,\theta_2,\theta_3), \qquad
	L=K\left(
	\sqrt{x^\prime(P^\prime)- \theta_1},
	\sqrt{x^\prime(P^\prime) - \theta_2},
	\sqrt{x^\prime(P^\prime)- \theta_3}\right).
\end{equation}
It is clear that the number fields $K$ and $L$
depend only on $E$ and $P$. Clearly $K$ has 
degree at most $6$. Moreover, $[L:K]$ is at most $4$,
since since the product of the three $x^\prime(P^\prime)-\theta_i$
is $y^\prime(P^\prime)^2$. Hence $L$ has degree at most $24$.
Although we shall not need this later, 
we note in passing that $L$ is the extension over which the 
four points satisfying $2Q=P^\prime$ are defined;
this is clear from the injectivity of the $2$-descent
map $\phi$ in the proof of Lemma~\ref{lem:2descent}.
\begin{lemma}
The number fields $K$ and $L$ are totally real and Galois.
\end{lemma}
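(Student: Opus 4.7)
The plan is to handle $K$ and $L$ in turn, exploiting that condition (ii) forces the cubic defining the right-hand side of $E'$ to split completely over $\R$, and that condition (iii) forces $x'(P')$ to dominate every root $\theta_i$. Throughout I use that the substitution \eqref{eqn:sub1}, \eqref{eqn:sub2} is an $\R$-isomorphism $E \to E'$ sending the identity to the identity, so $P' \in E_0'(\R)$, and that the two discriminants differ by a positive factor (the change of variables has $u=2$), so $\Delta(E')>0$.

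First I treat $K$. Since $\Delta(E')>0$, the discriminant of $f(x'):=x'^3+ax'^2+bx'+c$ is positive, so $\theta_1,\theta_2,\theta_3$ are real and distinct. As the splitting field of a separable polynomial, $K/\Q$ is Galois; as a Galois subfield of $\R$, every embedding $K\hookrightarrow\C$ has image $K\subset\R$, so $K$ is totally real.

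Next I treat $L$. Order the roots as $\theta_1<\theta_2<\theta_3$. The real locus $E'(\R)$ consists of points with $\theta_1\le x'\le\theta_2$ or $x'\ge\theta_3$, and the connected component of the identity is the unbounded piece $x'\ge\theta_3$. Hence $P'\in E_0'(\R)$ gives $x'(P')\ge \theta_3\ge\theta_i$ for $i=1,2,3$, so each
\[
\alpha_i \; := \; x'(P')-\theta_i
\]
is a nonnegative real number and $\sqrt{\alpha_i}\in\R$, giving $L\subset\R$.

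Finally I show $L/\Q$ is Galois (which, combined with $L\subset\R$, also upgrades real to totally real). Fix an embedding $\sigma\colon L\hookrightarrow\overline{\Q}$. Since $K/\Q$ is Galois, $\sigma|_K\in\Gal(K/\Q)$, and it acts on $\{\theta_1,\theta_2,\theta_3\}$ by some permutation $\pi\in S_3$. Because $x'(P')\in\Q$, we get $\sigma(\alpha_i)=x'(P')-\theta_{\pi(i)}=\alpha_{\pi(i)}$. Therefore $\sigma(\sqrt{\alpha_i})$ is a square root of $\alpha_{\pi(i)}$ in $\overline{\Q}$, hence equals $\pm\sqrt{\alpha_{\pi(i)}}\in L$. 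Thus $\sigma(L)\subseteq L$, and comparing degrees yields $\sigma(L)=L$, so $L/\Q$ is Galois. There is no real obstacle in this argument; the only point that requires attention is keeping track of the positivity of the $\alpha_i$ under every element of $\Gal(K/\Q)$, which is immediate once one observes that the Galois group merely permutes the $\alpha_i$ and that all three are already nonnegative under the given embedding.
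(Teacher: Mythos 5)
Your proof is correct and follows essentially the same route as the paper's (which simply notes that $K$ and $L$ are clearly Galois, that hypothesis (ii) makes the $\theta_i$ real, and that hypothesis (iii) gives $x'(P')\ge\theta_i$, so both fields are totally real). You merely spell out the details the paper leaves implicit — the positivity of the discriminant of the cubic, the description of $E_0'(\R)$, and the verification that Galois conjugation permutes the $\sqrt{\alpha_i}$ up to sign — all of which are accurate.
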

\begin{proof}
It is clear that $K$ and $L$ are Galois.
Hypothesis (ii) of Theorem~\ref{thm:Main1}
implies that $\theta_1$, $\theta_2$, $\theta_3$
are real, and hypothesis (iii) implies
that $x^\prime(P^\prime) \ge \theta_1$, $\theta_2$, $\theta_3$.
Thus $K$ and $L$ are totally real.
\end{proof}

We consider the equation
\begin{equation}\label{eqn:Bnuell}
	B_n=u^\ell
\end{equation}
where $u$ is a positive integer and $\ell$ is a prime.
The proof of Theorem~\ref{thm:Main1}
crucially relies on the following intermediate result.
\begin{proposition}\label{prop:prelim}
There is a positive constant $\kappa$,
a finite set of prime ideals $\cS$ of $\OO_L$, 
and a prime ideal $\fp \notin \cS$,
all depending only on $E$ and $P$,
such that the following holds.
Let $(n,u,\ell)$ be a solution to \eqref{eqn:Bnuell}
with $\ell>\kappa$. Then there is an elliptic curve $F/L$,
given by an integral Weierstrass model with discriminant $\Delta_F$, having the following properties.
\begin{enumerate}[(a)]
\item For all prime ideals $\fq \notin \cS$,
the model $F$ is minimal and semistable at $\fq$, 
and $\ell \mid v_{\fq}(\Delta_F)$.
\item The elliptic curve $F$ has multiplicative reduction
at $\fp$ and $\ell \mid v_{\fp}(\Delta_F)$.
\end{enumerate}
\end{proposition}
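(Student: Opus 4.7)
The plan is to construct a Frey curve from the $2$-descent structure that the totally real field $L$ is engineered to carry. Because $\sqrt{x^\prime(P^\prime)-\theta_i}\in L$ for each $i$, the point $P^\prime$ lies in $2E^\prime(L)$; writing $P^\prime=2Q$ with $Q\in E^\prime(L)$ gives $nP^\prime=2(nQ)\in 2E^\prime(L)$ for every $n$, so every $x^\prime(nP^\prime)-\theta_i$ is a square in $L^\ast$. Clearing the square denominator $B_n^2$, the element
\[
\alpha_i\;:=\;4A_n-\theta_iB_n^2\;\in\;\OO_L
\]
is a square in $L^\ast$, hence equals $\gamma_i^2$ for some $\gamma_i\in\OO_L$. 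Substituting $B_n=u^\ell$ yields the fundamental identity
\[
(\gamma_i-\gamma_j)(\gamma_i+\gamma_j)\;=\;\alpha_i-\alpha_j\;=\;(\theta_j-\theta_i)\,u^{2\ell},
\]
which is what will control every valuation at primes outside a bounded bad set.

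I then take as Frey curve the integral Weierstrass model
\[
F\colon\; Y^2\;=\;(X-\gamma_1)(X-\gamma_2)(X-\gamma_3),
\]
whose discriminant is $\Delta_F=16\prod_{i<j}(\gamma_i-\gamma_j)^2$. Let $\cS$ be the finite set of prime ideals of $\OO_L$ lying above $2$, above $\prod_{i<j}(\theta_i-\theta_j)$, and above the primes of bad reduction of $E^\prime$. For $\fq\notin\cS$ with $\fq\nmid u$, the identity forces $v_\fq(\gamma_i\pm\gamma_j)=0$, so $F$ has good reduction at $\fq$. For $\fq\notin\cS$ with $\fq\mid u$, the identity together with $\fq\nmid 2$ forces, for each pair $(i,j)$, exactly one of $v_\fq(\gamma_i-\gamma_j)$ and $v_\fq(\gamma_i+\gamma_j)$ to equal $2\ell v_\fq(u)$ and the other to vanish; consistency of the resulting signs $\gamma_i\equiv\pm\gamma_j\pmod{\fq}$ leaves only two possibilities, that exactly one pair or all three pairs of the $\gamma_i$ coincide modulo $\fq$. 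In the ``one pair'' case the model is locally minimal at $\fq$ with multiplicative reduction and $v_\fq(\Delta_F)=4\ell v_\fq(u)$. In the ``three pairs'' case, a translate by $\gamma_1$ followed by rescaling by a uniformiser of $\fq$ to the power $\ell v_\fq(u)$ witnesses good reduction, giving minimal discriminant valuation zero. Either way the reduction is semistable and the minimal discriminant valuation is divisible by $\ell$.

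For condition (b) I apply Lemma~\ref{lem:prime} with $T$ the set of rational primes below $\cS$ to obtain a constant $\kappa$ and a rational prime $p\notin T$ such that $p\mid u$ for every solution with $\ell>\kappa$. Fix once and for all a prime ideal $\fp$ of $\OO_L$ above $p$. Given a solution $(n,u,\ell)$, choose signs $\gamma_i\leftarrow\epsilon_i\gamma_i$ so that exactly one pair coincides modulo $\fp$: if the initial choice gives three coincidences at $\fp$, flipping the sign of any single $\gamma_i$ converts the pattern to a single coincidence (on the pair not involving $i$). With this choice $F$ is multiplicative at $\fp$ with $v_\fp(\Delta_F)=4\ell v_\fp(u)$, divisible by $\ell$. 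The main obstacle I anticipate is patching the local analyses into a single global integral Weierstrass model that is minimal at every $\fq\notin\cS$: the rescaling witnessing good reduction in the ``three pairs'' case at $\fq\mid u$ does not globalise without an ideal-class argument, so one must either exhibit a global Weierstrass rescaling that simultaneously absorbs every such non-minimal $\fq$, or else enlarge $\cS$ by the finitely many primes coming from the Weierstrass class-group obstruction of $F/L$.
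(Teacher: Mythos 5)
Your construction is, in all essentials, the paper's: the $2$-descent squares $4A_n-\theta_iB_n^2=\gamma_i^2$, the identity $(\gamma_i-\gamma_j)(\gamma_i+\gamma_j)=(\theta_j-\theta_i)u^{2\ell}$, a Frey curve whose root differences are the $\gamma_i-\gamma_j$, and Lemma~\ref{lem:prime} to produce the fixed multiplicative prime $\fp$. Your local analysis is also correct, including the observation that the sign pattern forces either exactly one or all three pairs of the $\gamma_i$ to coincide modulo a prime $\fq\mid u$ outside the bad set. But the obstacle you flag at the end is a genuine gap, not a loose end: at a ``three pairs'' prime $\fq\notin\cS$ your model $Y^2=(X-\gamma_1)(X-\gamma_2)(X-\gamma_3)$ reduces to $Y^2=(X-\bar{\gamma})^3$, which is additive and non-minimal, so property (a) fails for the model you have written down. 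Neither of your proposed repairs is carried out, and the second one is problematic as stated: the primes in the support of the Weierstrass class obstruction of $F/L$ depend on the particular solution $(n,u,\ell)$ (they are among the primes dividing $u$), whereas the set $\cS$ in the Proposition must be fixed in advance, depending only on $E$ and $P$. (A correct version of that idea is to choose a model minimal outside a fixed set of primes whose classes generate $\Cl(L)$, but one must then still verify that the resulting minimal discriminant valuations remain divisible by $\ell$ and that $\fp$ stays multiplicative.)

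The paper closes exactly this gap by intervening \emph{before} the Frey curve is written down. Setting $w_1=\varepsilon_1-\varepsilon_2$, $w_2=\varepsilon_2-\varepsilon_3$, $w_3=\varepsilon_3-\varepsilon_1$ (so $w_1+w_2+w_3=0$ and $\ell\mid v_\fq(w_i)$ for $\fq\notin\cS$), it applies Lemma~\ref{lem:Minkowski} to replace the $w_i$ by $z_i=\alpha w_i\in\OO_L$ whose greatest common divisor $z_1\OO_L+z_2\OO_L+z_3\OO_L$ is supported on the fixed Minkowski set $\cT\subseteq\cS$. The Frey curve is then $F:Y^2=X(X-z_1)(X+z_2)$, with $\Delta_F=16(z_1z_2z_3)^2$. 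For $\fq\notin\cS$ at most one of the $z_i$ can be divisible by $\fq$ (two would force the third via $z_1+z_2+z_3=0$, contradicting the support of the gcd), so the ``three pairs'' degeneration simply cannot occur outside $\cS$, the model is automatically minimal and semistable there, and the congruences $v_\fq(z_i)\equiv 0\pmod{\ell}$ and the behaviour at $\fp$ survive the scaling because $v_\fq(\alpha)\equiv 0\pmod{\ell}$ and $v_\fp(\alpha)=0$. If you incorporate this class-group rescaling of the differences, your argument becomes a complete proof; without it, the global minimality required in (a) is not established.
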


We prove Proposition~\ref{prop:prelim} in Section~\ref{sec:prop}.
We now show that the proposition implies Theorem~\ref{thm:Main1}.
\begin{proof}[Proof of Theorem~\ref{thm:Main1}]
We claim that the prime exponent $\ell$ in \eqref{eqn:Bnuell} is 
bounded. Theorem~\ref{thm:Main1} follows from our claim
thanks to the theorem of Everest, Reynolds and Stevens
(Theorem~\ref{thm:ERS}).

To prove our claim, let $F$, $\kappa$, $\cS$, $\fp$
be as in Proposition~\ref{prop:prelim}.	
By Theorem~\ref{thm:modularity}, there is at most a finite
number of $j$-invariants $j_1,j_2,\dotsc,j_m$ of non-modular
elliptic curves defined over $L$. 
We let 
\[
	\kappa^\prime=\max(4,\lvert \Delta_L \rvert, C_L, 
	\Norm_{L/\Q}(\fp),\kappa,\kappa_1,\kappa_2)
\]
where
$\Delta_L$ is the discriminant of $L$, 
the constant $C_L$ is as in Theorem~\ref{thm:irred},
and
	\[
\kappa_1=\max(-v_{\fp}(j_i) \; : \; 1 \le i \le m),
	\qquad
\kappa_2 =\max(\Norm_{L/\Q}(\fq) \; : \; \fq \in \cS\}).
\]
It is clear that $\kappa^\prime$ 
depends only on $E$ and $P$.

Let $\ell>\kappa^\prime$ be a prime and let $n$
and $u$ be positive integers such that \eqref{eqn:Bnuell}
holds. Let $F/L$ be the elliptic curve, given by an integral model,
whose existence is asserted in Proposition~\ref{prop:prelim}.
Write $j(F)$
for the $j$-invariant of $F$ and $\cN$ for its conductor. 
By (b), $F$ has multiplicative
reduction at $\fp$ and $\ell \mid v_{\fp}(\Delta_F)$.
Thus, for $1 \le i \le m$,
\[
	-v_{\fp}(j_i) \; \le \; \kappa_1 \; < \;	\ell \; \le \; 
	v_{\fp}(\Delta_F) \; = \; -v_{\fp}(j(F)).
\]
It follows that $j(F) \ne j_i$ for $1 \le i \le m$ and so 
the elliptic curve $F$ is modular.

Since $\ell>\kappa_2$,
all $\mathfrak{l} \mid \ell$ satisfy $\mathfrak{l} \notin S$,
and hence, by (a), the elliptic curve $F$ is semistable at all $\mathfrak{l} \mid \ell$.
Hence by Theorem~\ref{thm:irred}, as $\ell>C_L$, the mod $\ell$ representation $\overline{\rho}_{F,\ell}$ is irreducible.

Next, we apply Theorem~\ref{thm:ll}.
We have that $\ell \ge 5$. Moreover, $\ell > \lvert \Delta_L\rvert$
therefore $\ell$ is unramified in $L$. 
As $\Q(\zeta_\ell)^+$ is ramified at $\ell$
we conclude that $\Q(\zeta_\ell)^+ \not \subseteq L$.
Hence hypothesis (i)
of Theorem~\ref{thm:ll} is satisfied. We have already checked that hypotheses
(ii), (iii) and (iv) hold. Hypothesis (v) follows from (a)
as all $\mathfrak{l} \mid \ell$ satisfy $\mathfrak{l} \notin S$.
Thus by Theorem~\ref{thm:ll} there is a Hilbert newform
$\ff$ of parallel weight $2$ and level $\fN$
such that $\overline{\rho}_{F,\ell} \sim \overline{\rho}_{\ff,\lambda}$.
It follows from (a) and the recipe \eqref{eqn:Nl} that $\fN$
is supported on the primes belonging to $\cS$.
Moreover, for any $\fq \in \cS$, we have 
\cite[Theorem IV.10.4]{SilvermanAdvanced} that,
\[
	v_\fq(\fN) \; \le \; v_\fq(\cN) \; \le \;
	2+3 v_{\fq}(3) + 6 v_{\fq}(2) \; \le \; 2+6[L:\Q].
\]
Thus, the possible levels $\fN$ belong to a 
finite set that depends only on $E$ and $P$, and hence
the set of possible newforms $\ff$ depends only on $E$ and $P$.
Finally, by (b), $\fp$ is a prime of multiplicative
reduction for $F$ but $\ell \mid v_\fp (\Delta_F)$.
Thus, from the recipe \eqref{eqn:Nl},
we have that $\fp \mid\mid \cN$ but $\fp \nmid \fN$.
Moreover, as $\ell>\kappa^\prime \ge \Norm_{L/\Q}(\fp)$, we have that $\fp \nmid \ell$.
It follows from Remark~\ref{remark} that
\begin{equation}\label{eqn:congruence}
	\pm (\Norm_{L/\Q}(\fp)+1) \; \equiv \; a_{\fp}(\ff) \pmod{\lambda}.
\end{equation}
Since $\ell \mid \Norm_{\Q_{\ff}/\Q}(\lambda)$ we have that $\ell$ divides
\[
	\Norm_{\Q_{\ff}/\Q}\left( (\Norm_{L/\Q}(\fp)+1) \mp a_{\fp}(\ff) \right).
\]
This quantity is non-zero as $\lvert a_{\fp}(\ff) \rvert < 
2 \sqrt{\Norm_{L/\Q}(\fp)}$ (see for example
\cite[Theorem 0.1]{livne}). Hence $\ell$ is bounded by a bound
depending on $\ff$ and $\fp$, and hence ultimately
depending only on $E$ and $P$. This completes the proof.
\end{proof}

\section{Proof of Proposition~\ref{prop:prelim}}\label{sec:prop}

In this section, we complete the proof of Theorem~\ref{thm:Main1}
by proving Proposition~\ref{prop:prelim}.
We recall the Minkowski constant $M_L$ for a number field $L$:
\[
	M_L \; = \; \lvert \Delta_L \rvert^{1/2} \left(\frac{4}{\pi} \right)^{s} \frac{d!}{d^d}
\]
where $s$ is the number
of pairs of complex embeddings of $L$ and $d$ is its degree,
and $\Delta_L$ is its discriminant. 
The following lemma is a sharpening of an elementary result of 
Freitas and Siksek \cite[Lemma 3.2]{siksekgcd}.  
\begin{lemma}\label{lem:Minkowski}
Let $L$ be a number field. 
Let
\begin{equation}\label{eqn:Minkowski}
	\cT \; = \; \{ \fq \; : \; \text{$\fq$ is 
	a prime ideal of $\OO_L$ and $\Norm_{L/\Q}(\fq)<M_L$}\}.
\end{equation}	
Let $w_1,w_2, w_3 \in \OO_L \setminus \{0\}$. Then, there
exists $\alpha \in L^{\times}$ such that ${z_i}:=\alpha w_i \in
	\OO_L \setminus \{0\}$ and the ideal
${z_1}\OO_L+{z_2}\OO_L+{z_3}\OO_L$
is supported on $\cT$.  
\end{lemma}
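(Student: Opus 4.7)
The plan is a direct application of the Minkowski bound to the ideal generated by $w_1, w_2, w_3$. First I form the nonzero integral ideal
\[
I \;:=\; w_1\OO_L + w_2\OO_L + w_3\OO_L,
\]
which is a nonzero ideal of $\OO_L$ since each $w_i$ lies in $\OO_L \setminus \{0\}$. The Minkowski bound asserts that the ideal class of $I$ contains an integral representative $J$ with $\Norm_{L/\Q}(J) \le M_L$; for the sharpening to strict inequality on the primes that this statement requires (note that \cite[Lemma~3.2]{siksekgcd} uses only the non-strict version), one invokes the strict form of Minkowski's lattice-point argument or inspects the boundary cases in which $\Norm_{L/\Q}(J) = M_L$. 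Either way, one extracts an integral $J$ in the class of $I$ every prime factor of which has norm strictly less than $M_L$, so $J$ is supported on $\cT$.

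Because $J$ lies in the same ideal class as $I$, there exists $\alpha \in L^{\times}$ with $\alpha I = J$ as fractional ideals. Setting $z_i := \alpha w_i$ for $i=1,2,3$, each $z_i$ is nonzero and lies in $\alpha I = J \subseteq \OO_L$, so $z_i \in \OO_L \setminus \{0\}$. Moreover,
\[
z_1 \OO_L + z_2 \OO_L + z_3 \OO_L \;=\; \alpha\bigl(w_1\OO_L + w_2\OO_L + w_3\OO_L\bigr) \;=\; \alpha I \;=\; J,
\]
which is supported on $\cT$ by construction, delivering the required conclusion.

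The main obstacle, such as it is, is securing the \emph{strict} inequality in the Minkowski bound, since the classical statement yields only $\Norm_{L/\Q}(J) \le M_L$. The boundary case $\Norm_{L/\Q}(J) = M_L$ can force a prime of norm exactly $M_L$ into $J$ only if $J$ is itself that prime; in every other situation $J$ factors as a product of strictly smaller primes, each automatically lying in $\cT$. I would expect to dispatch the residual case either by appealing to the open-body version of Minkowski's convex body theorem, which produces a lattice point with a strictly smaller norm, or by a short direct argument eliminating the single-prime boundary configuration. Once this refinement is in hand, the rest of the proof is the familiar dictionary between \lq\lq same ideal class\rq\rq\ and \lq\lq differ by multiplication by a principal ideal\rq\rq, combined with the definition of $\cT$.
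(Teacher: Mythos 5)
Your proof is correct and takes essentially the same route as the paper: use Minkowski's theorem to replace the class of $w_1\OO_L+w_2\OO_L+w_3\OO_L$ by an integral representative supported on $\cT$, scale by the resulting $\alpha$, and check integrality of the $z_i$ (your observation that $z_i\in\alpha I=J\subseteq\OO_L$ is a clean way to do this). The strict-versus-non-strict inequality point you raise is not an obstacle you introduced but one equally present in the paper, which simply asserts that the classes of the primes of norm $<M_L$ generate $\Cl(L)$; as you note, the only delicate configuration is a prime of norm exactly $M_L$, and one could in any case harmlessly enlarge $\cT$ to use $\le$ without affecting any later argument.
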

We observe that $z_1 \OO_L+z_2 \OO_L+z_3\OO_L$ is the greatest common
divisor of the principal ideals $z_1 \OO_L$, $z_2 \OO_L$, $z_3 \OO_L$.
\begin{proof}
Write 
\begin{equation}\label{eqn:fb}
	\fb \;= \; w_1 \OO_L+w_2 \OO_L+w_3 \OO_L.
\end{equation}
We let $\Cl(L)$ denote the class group of $L$. 
We denote by $[\fa]$ the class in $\Cl(L)$
of an ideal $\fa$ of $\OO_L$.
Minkowski's Theorem asserts that the classes $[\fq]$
with $\fq \in \cT$ generate $\Cl(L)$.
Thus there is some (integral) ideal $\fa$
of $\OO_L$, supported on $\cT$,
such that $[\fb]=[\fa]$ and hence
$(\alpha \cdot \OO_L) \cdot 
	\fb  = 
	\fa$
for some $\alpha \in L^\times$.
Let $z_i=\alpha w_i$. Then 
\[
	z_1 \OO_L+z_2 \OO_L+z_3 \OO_L
	\; = \;
	(\alpha \cdot \OO_L) \cdot (w_1 \OO_L+w_2 \OO_L+w_3 \OO_L)
	\; = \; (\alpha \cdot \OO_L) \cdot \fb \; = \; \fa.
\]
It remains to show that the $z_i$ are integral. 
We note that
\[
		z_i \OO_L \; = \; (\alpha \OO_L) (w_i \OO_L)
		\; = \; \fa \cdot \fb^{-1}\cdot (w_i \OO_L) 
\]
which is an integral ideal, since $\fb \mid w_i$ by 
\eqref{eqn:fb}. This completes the proof.
\end{proof}

We quickly recall the notation and assumptions
of the previous section. We let 
$E/\Q$ be an elliptic curve given by an integral Weierstrass model
\eqref{eqn:model} and $P \in E(\Q)$. We suppose
that $E$ and $P$ satisfy conditions (i), (ii) and (iii) of
Theorem~\ref{thm:Main1}.
We let $A_n$, $B_n$, $C_n$ be given by \eqref{eqn:divseq}
and \eqref{eqn:cond}. We let $E^\prime$ be the short Weierstrass model
given in \eqref{eqn:Eprime} where the coefficients
$a$, $b$, $c$ are given in \eqref{eqn:sub1}, and we recall
the transformation \eqref{eqn:sub2} relating the two models.
We write $P^\prime \in E^\prime(\Q)$ for the point corresponding
to $P$, and we let $\theta_1$, $\theta_2$, $\theta_3$
be the roots of the polynomial ${x^\prime}^3+a {x^\prime}^2+ b x^\prime+c$,
and we define the number fields $K$ and $L$ by \eqref{eqn:KL}.
\begin{lemma}\label{lem:2descent}
For any positive integer $n$, and for $i=1$, $2$, $3$,
there exists some
$\varepsilon_{i,n} \in \OO_L \setminus \{0\}$
such that $4A_n-\theta_i B_n^2=\varepsilon_{i,n}^2$.
\end{lemma}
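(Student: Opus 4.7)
The strategy is to read off the three identities from the classical 2-descent map over $L$. Write $T_i = (\theta_i, 0)$ for the three 2-torsion points of $E^\prime$, and recall the standard homomorphism
\[
\phi : E^\prime(L)/2 E^\prime(L) \longrightarrow (L^\times/L^{\times 2})^3, \qquad Q \longmapsto \bigl(x^\prime(Q)-\theta_1,\; x^\prime(Q)-\theta_2,\; x^\prime(Q)-\theta_3\bigr),
\]
defined away from the 2-torsion; this is injective, and in particular $Q \in 2 E^\prime(L)$ if and only if each $x^\prime(Q) - \theta_i$ is a square in $L^\times$ (all three $\theta_i$ lie in $L$ since $K \subseteq L$).

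By the construction of $L$ in \eqref{eqn:KL}, each of $x^\prime(P^\prime) - \theta_i$ is already a square in $L$; hence $\phi(P^\prime) = 0$ and there exists $Q_0 \in E^\prime(L)$ with $2 Q_0 = P^\prime$. Then $2(n Q_0) = n P^\prime$, so $\phi(n P^\prime) = 0$ as well. Since $P$ has infinite order in $E(\Q)$, the point $n P^\prime \in E^\prime(\Q)$ is neither $O$ nor any $T_i$ (either would make $n P^\prime$ a torsion point), so $\phi$ is defined at $n P^\prime$, and we obtain $\gamma_i \in L^\times$ with $x^\prime(n P^\prime) - \theta_i = \gamma_i^2$ for $i = 1, 2, 3$. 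Substituting $x^\prime(n P^\prime) = 4 A_n / B_n^2$ from \eqref{eqn:divseq} and \eqref{eqn:sub2} and clearing denominators produces
\[
4 A_n - \theta_i B_n^2 \;=\; (B_n \gamma_i)^2,
\]
and I set $\varepsilon_{i,n} := B_n \gamma_i \in L^\times$.

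It remains to check that $\varepsilon_{i,n} \in \OO_L \setminus \{0\}$. Each $\theta_i$ is a root of the monic polynomial ${x^\prime}^3 + a {x^\prime}^2 + b x^\prime + c \in \Z[x^\prime]$, hence an algebraic integer; combined with $A_n, B_n \in \Z$, this gives $\varepsilon_{i,n}^2 = 4 A_n - \theta_i B_n^2 \in \OO_L$, and since $\OO_L$ is integrally closed in $L$ we conclude $\varepsilon_{i,n} \in \OO_L$. Non-vanishing is immediate: $\varepsilon_{i,n} = 0$ would force $n P^\prime = T_i$, a non-trivial 2-torsion point, contradicting the infinite order of $P$. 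The main potential obstacle is the appeal to 2-descent: one must justify that injectivity of $\phi$ yields $P^\prime \in 2 E^\prime(L)$ from the fact that all three components $x^\prime(P^\prime) - \theta_i$ are squares in $L$, but this is standard Kummer theory for $E^\prime[2]$ once all three 2-torsion points are $L$-rational (and is exactly the remark already flagged by the authors just before the lemma).
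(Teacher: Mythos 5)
Your proof is correct and uses essentially the same tool as the paper, namely the complete $2$-descent homomorphism $\phi$; the only cosmetic difference is that you work over $L$ and invoke injectivity of $\phi$ to get $P^\prime\in 2E^\prime(L)$, whereas the paper works over $\Q$, uses only the homomorphism property, and treats even and odd $n$ separately before passing to $L$. Your explicit verification of integrality and non-vanishing of $\varepsilon_{i,n}$ is a welcome addition that the paper leaves implicit.
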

\begin{proof}
We make use of the standard $2$-descent homomorphism (e.g. 
\cite[Proposition X.1.4]{silverman}),
\[
	\phi:\ E^\prime(\mathbb{Q})/2E^\prime(\mathbb{Q})\longrightarrow \mathbb{Q}(\theta_1)^{\times}/(\mathbb{Q}(\theta_1)^\times)^2\times \mathbb{Q}(\theta_2)^{\times}/(\mathbb{Q}(\theta_2)^{\times})^2\times\mathbb{Q}(\theta_3)^{\times}/(\mathbb{Q}(\theta_3)^{\times})^2,
\]
which, for $(x^\prime,y^\prime) \in E^\prime(\Q) \setminus E[2]$, is given
by
\[
(x^\prime,y^\prime) \; \mapsto \; 
	\left((x^\prime-\theta_1)(\mathbb{Q}(\theta_1)^{\times})^2,(x^\prime-\theta_2)(\mathbb{Q}(\theta_2)^{\times})^2,(x^\prime-\theta_3)(\mathbb{Q}(\theta_3)^{\times})^2 \right).
\]
It follows that 
$x^\prime(nP^\prime)-\theta_i \in (K^\times)^2$
if $n$ is even, and 
$(x^\prime(nP^\prime)-\theta_i)/(x^\prime(P)-\theta_i) \in
(K^\times)^2$ if $n$ is odd.
From the definition of $L$ in \eqref{eqn:KL} we conclude that 
$x^\prime(nP^\prime)-\theta_i \in (L^\times)^2$.
The lemma follows as $x^\prime(nP^\prime)=4 A_n/B_n^2$
by \eqref{eqn:divseq} and \eqref{eqn:sub2}.
\end{proof}

\begin{lemma}\label{lem:gcd}
	Let $n$ be a positive integer. For any $i$, $j \in \{1,2,3\}$
	with $i \ne j$, we have that the ideal
\begin{equation}\label{eqn:gcd}
	(\varepsilon_{i,n}-\varepsilon_{j,n}) \OO_L+
	(\varepsilon_{i,n}+\varepsilon_{j,n}) \OO_L
\end{equation}
is supported on the prime ideals dividing $2 \Delta_E$,
where $\Delta_E$ is the discriminant of the model
 \eqref{eqn:model}.
\end{lemma}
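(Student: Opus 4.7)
The plan is to show that any prime ideal $\fq$ of $\OO_L$ dividing both generators $\varepsilon_{i,n}-\varepsilon_{j,n}$ and $\varepsilon_{i,n}+\varepsilon_{j,n}$ of the ideal \eqref{eqn:gcd} must lie above a rational prime dividing $2\Delta_E$. The starting observation is that such a $\fq$ also contains the sum and difference of these generators, hence divides both $2\varepsilon_{i,n}$ and $2\varepsilon_{j,n}$. If $\fq \mid 2$ there is nothing to prove, so I will assume from here on that $\fq \nmid 2$; then $\fq$ divides $\varepsilon_{i,n}$ and $\varepsilon_{j,n}$ themselves.

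Next I would exploit Lemma~\ref{lem:2descent}, which gives
\[
\varepsilon_{i,n}^2-\varepsilon_{j,n}^2 \;=\; (4A_n-\theta_i B_n^2)-(4A_n-\theta_j B_n^2)\;=\;(\theta_j-\theta_i)B_n^2,
\]
so $\fq \mid (\theta_j-\theta_i)B_n^2$. I would then split into subcases. If $\fq \mid B_n$, then combining with $\fq\mid \varepsilon_{i,n}^2=4A_n-\theta_i B_n^2$ forces $\fq\mid 4A_n$, and since $\fq\nmid 2$ this gives $\fq\mid A_n$. But then $\fq$ contains both of the coprime rational integers $A_n$ and $B_n$ from \eqref{eqn:cond}, so $\fq \supseteq A_n\OO_L+B_n\OO_L=\OO_L$, contradicting that $\fq$ is a proper ideal. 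So this subcase is impossible, leaving $\fq \mid \theta_i-\theta_j$.

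To finish, I need to show that every prime of $\OO_L$ dividing $\theta_i-\theta_j$ lies above a rational prime dividing $2\Delta_E$. The plan is to compute the discriminant of the short Weierstrass model $E^\prime$ in \eqref{eqn:Eprime} in two different ways. On the one hand, since $E^\prime$ has $a_1=a_3=0$, the standard formulas yield $b_2=4a$, $b_4=2b$, $b_6=4c$, $b_8=4ac-b^2$, and a direct expansion gives $\Delta_{E^\prime}=16\cdot\mathrm{disc}(x^{\prime 3}+ax^{\prime 2}+bx^\prime+c)=16\prod_{i<j}(\theta_i-\theta_j)^2$. On the other hand, the substitution \eqref{eqn:sub2} is a Weierstrass change of coordinates of the form $(x,y)=(u^2 x^\prime+r,\,u^3 y^\prime+su^2 x^\prime+t)$ with scaling factor $u=1/2$ (read off from $x=x^\prime/4$ and $y=y^\prime/8-a_1 x^\prime/8-a_3/2$), so the transformation law for discriminants gives $\Delta_{E^\prime}=u^{-12}\Delta_E=2^{12}\Delta_E$. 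Combining the two expressions yields
\[
\prod_{i<j}(\theta_i-\theta_j)^2 \;=\; 2^8\,\Delta_E.
\]
Hence $\fq^2$ divides the rational integer $2^8\Delta_E$, so $\fq$ lies above a rational prime dividing $2\Delta_E$, as required.

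The only real obstacle is the bookkeeping in the discriminant computation, specifically identifying the substitution \eqref{eqn:sub2} as a Weierstrass isomorphism with $u=1/2$ so that the correct power of $2$ emerges; everything else reduces to elementary ideal manipulation together with the coprimality in \eqref{eqn:cond}.
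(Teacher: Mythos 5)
Your proposal is correct and follows essentially the same route as the paper: assume $\fq\nmid 2$, deduce $\fq\mid\varepsilon_{i,n},\varepsilon_{j,n}$, subtract the two equations from Lemma~\ref{lem:2descent}, and use $\gcd(A_n,B_n)=1$ to force $\fq\mid(\theta_i-\theta_j)$. The only difference is that you spell out the final step (the identity $\prod_{i<j}(\theta_i-\theta_j)^2=2^8\Delta_E$ via the $u=1/2$ change of coordinates), which the paper leaves implicit with ``The lemma follows''; your computation of that identity is accurate.
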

\begin{proof}
Let $\fq \nmid 2$ be an ideal of $\OO_L$
dividing \eqref{eqn:gcd}. Then $\fq$
divides both $\varepsilon_{i,n}-\varepsilon_{j,n}$
and $\varepsilon_{i,n}+\varepsilon_{j,n}$,
and so both $\varepsilon_{i,n}$
and $\varepsilon_{j,n}$. Thus
\[
	4 A_n - \theta_i B_n^2  \; \equiv \;  4 A_n - \theta_j B_n^2 \; \equiv \; 0 
	\pmod{\fq}.
\]
Therefore, $\fq$ divides both $(\theta_i-\theta_j) B_n^2$
and $4(\theta_j-\theta_i) A_n$. Since $\gcd(A_n,B_n)=1$
we see that $\fq$ divides $4(\theta_j-\theta_i)$.
The lemma follows.
\end{proof}

\begin{proof}[Proof of Proposition~\ref{prop:prelim}]
We apply Lemma~\ref{lem:gcd} with the number field $L$
given by \eqref{eqn:KL}, and we let $\cT$
be as in \eqref{eqn:Minkowski}.
We let
\[
	\cS\; = \; \cT \cup \{ \fq \mid 2 \Delta_E \;
	: \;
	\text{$\fq$ is a prime ideal of $\OO_L$}\}.
\]
We let $T$ be the set of rational primes $q$
below the primes in $\cS$.
We now apply Lemma~\ref{lem:prime} with this particular
set $T$. We let $\kappa$ and $p \notin T$ be as in that lemma.
Let $n$, $u$ be positive integers and $\ell$ be a prime
such that $B_n=u^\ell$.
We suppose $\ell >\kappa$. Then, from the conclusion of Lemma~\ref{lem:prime},
we have $p \mid u$.
We henceforth fix a prime ideal $\fp$
of $\OO_L$ above $p$. We note in particular
that 
\begin{equation}\label{eqn:frakp}
\fp \notin \cS.
\end{equation}

From Lemma~\ref{lem:2descent} 
\begin{equation}\label{eqn:epsilon}
	4A_n-\theta_1 u^{2\ell} \; = \; \varepsilon_1^2,
	\qquad
	4A_n-\theta_2 u^{2\ell} \; =\; \varepsilon_2^2,
	\qquad
	4A_n-\theta_3 u^{2\ell} \; =\; \varepsilon_3^2,
\end{equation}
where $\varepsilon_i \in \OO_L\setminus \{0\}$ (we 
have simplified
the notation by writing $\varepsilon_i$ for $\varepsilon_{i,n}$).
We know that $\fp \mid p \mid u$.
Thus $\fp \mid (\varepsilon_i^2 - \varepsilon_j^2)$
for all $i$, $j \in \{1,2,3\}$,
and so $\fp \mid (\varepsilon_i-\varepsilon_j)$
or $\fp \mid (\varepsilon_i+\varepsilon_j)$. 
Replacing $\varepsilon_2$
by $-\varepsilon_2$ if necessary, we may suppose
$\fp \mid (\varepsilon_1-\varepsilon_2)$.
Replacing $\varepsilon_3$ by $-\varepsilon_3$
if necessary, we may suppose
that $\fp \mid (\varepsilon_2+\varepsilon_3)$.
We invoke Lemma~\ref{lem:gcd} which, in combination
with \eqref{eqn:frakp}, implies
that $\fp \nmid (\varepsilon_1+\varepsilon_2)$
and $\fp \nmid (\varepsilon_2-\varepsilon_3)$.
Since 
\[
	\varepsilon_3-\varepsilon_1 \; =\;
	-(\varepsilon_1-\varepsilon_2)-(\varepsilon_2-\varepsilon_3)
\]
we conclude that $\fp \nmid \varepsilon_3-\varepsilon_1$.
To summarize,
\begin{equation}\label{eqn:summary}
	\fp \mid (\varepsilon_1-\varepsilon_2),
	\qquad
	\fp \nmid (\varepsilon_2-\varepsilon_3),
	\qquad
	\fp \nmid (\varepsilon_3-\varepsilon_1).
\end{equation}
From \eqref{eqn:epsilon},
\begin{align*}
	(\varepsilon_1-\varepsilon_2)(\varepsilon_1+\varepsilon_2)=(\theta_2-\theta_1) u^{2 \ell} \\
	(\varepsilon_2-\varepsilon_3)(\varepsilon_2+\varepsilon_3)=(\theta_3-\theta_2) u^{2 \ell} \\
	(\varepsilon_3-\varepsilon_1)(\varepsilon_3+\varepsilon_1)=(\theta_1-\theta_3) u^{2 \ell} \\
\end{align*}
Since $u$ is a positive integer, we observe that $\varepsilon_i \ne \pm \varepsilon_j$ for $i \ne j$. 
Moreover, Lemma~\ref{lem:gcd} allows us to conclude that
\[
	\ell \mid v_\fq(\varepsilon_i-\varepsilon_j)  
	\qquad
	\text{for all $\fq \notin \cS$}.
\]
We let
\begin{equation}\label{eqn:wi}
	w_1=\varepsilon_1-\varepsilon_2,
	\qquad
	w_2=\varepsilon_2-\varepsilon_3,
	\qquad
	w_3=\varepsilon_3-\varepsilon_1.
\end{equation}
Thus
\[
	v_\fq(w_1) \equiv v_\fq(w_2) \equiv v_\fq(w_3) \equiv 0 \pmod{\ell},
	\qquad \text{for all $\fq \notin \cS$}.
\]
We shall make repeated use of the identity
\[
	w_1+w_2+w_3=0.
\]
By Lemma~\ref{lem:Minkowski},  there is some $\alpha \in L^\times$
such that $z_i:=\alpha w_i \in \OO_L \setminus \{0\}$ and 
\[
\fa:=z_1 \OO_L+z_2 \OO_L+z_3 \OO_L
\]
is supported on $\cT \subseteq \cS$. Let $\fq \notin \cS$.
Then $v_\fq(z_i)=0$ for at least one of $i=1$, $2$, $3$,
and so $v_\fq(\alpha)=v_\fq(z_i)-v_\fq(w_i) \equiv 0 \pmod{\ell}$.
It follows that 
\begin{equation}\label{eqn:zi1}
	v_\fq(z_1) \equiv v_\fq(z_2) \equiv v_\fq(z_3) \equiv 0 \pmod{\ell}, \qquad \text{for all $\fq \notin \cS$}.
\end{equation}
Next, we consider the valuations at $\fp$, which we recall does
not belong to $\cS$, by \eqref{eqn:frakp}. 
From \eqref{eqn:summary}
we have $v_\fp(z_2)=v_\fp(z_3)=v_\fp(\alpha)$. 
The integrality of the $z_i$ implies $v_\fp(\alpha) \ge 0$.
If $v_\fp(\alpha)>0$, then $\fp$ divides both $z_2$
and $z_3$ and so divides $z_1=-z_2-z_3$,
contradicting the fact that $\fa$ is supported on $\cS$.
Hence $v_\fp(\alpha)=0$. It follows from \eqref{eqn:summary} that
\begin{equation}\label{eqn:zi2}
	\fp \mid z_1, \qquad \fp \nmid z_2, \qquad \fp \nmid z_3.
\end{equation}

We let $F/L$ be the elliptic curve given by the integral
Weierstrass model
\[
	F \; : \; Y^2=X(X-z_1)(X+z_2).
\]
The discriminant and $c_4$-invariant of the model $F$ are
respectively
\[
	\Delta_F= 16 (z_1 z_2 z_3)^2, \qquad
c_4=16(z_1^2 - z_2 z_3)=16(z_2^2-z_3 z_1)=16(z_3^2-z_1z_2).
\]
The model is minimal and semistable away from the primes
dividing $2(z_1 \OO_L+z_2 \OO_L+z_3 \OO_L)=2\fa$
which all belong to $\cS$. Moreover, we note that
for $\fq \notin \cS$, we have $\ell \mid \ord_\fq(\Delta_F)$
from \eqref{eqn:zi1}. Finally, by \eqref{eqn:zi2},
the prime $\fp$ is a multiplicative prime for $F$.
This completes the proof.
\end{proof}

\section{Proof of Theorem~\ref{thm:Main2}}\label{sec:conjectures}

In this section we give precise statements of the three
conjectures mentioned in the introduction, and then we use
these to prove Theorem~\ref{thm:Main2}.
The main difference between Theorems~\ref{thm:Main1}
and~\ref{thm:Main2} is that in the latter the Frey curve $F$
is defined over a number field $L$ that is not totally real.
Our strategy follows closely arguments of 
\c{S}eng\"{u}n and Siksek \cite{SengunSiksek},
and of \c{T}urca\c{s} \cite{Turcas}, who proved versions 
of Fermat's Last Theorem over certain general number fields,
conditional on Serre's modularity conjecture
and Serre's uniformity conjecture.

The first conjecture we need 
is a weak version of Serre's modularity conjecture
over number fields. We recommend Sections 2 and 3 of \cite{SengunSiksek}
for an explanation of mod $\ell$ modular forms over number fields
and their Hecke operators, and further details of the 
terminology appearing in the statement of the conjecture.
\begin{conj} \label{conj:modularity}
Let $L$ be a number field and write $G_L=\Gal(\overline{\Q}/L)$.
Let $\ell \ge 5$ be a prime, and
let $\overline{\rho} : G_L \rightarrow \GL_2(\overline{\F}_\ell)$ be an odd,
irreducible, continuous representation with Serre conductor $\fN$ 
(prime-to-$\ell$
part of its Artin conductor) and trivial character (prime-to-$\ell$ part of
${\det}(\overline{\rho})$).
Assume that $\ell$ is unramified in $L$ and that $\overline{\rho}\vert_{G_{K_\fl}}$ arises from a finite-flat group scheme over the completion
$L_\fl$ for every prime $\fl \mid \ell$. Then there is a (weight $2$) mod $\ell$ eigenform $\theta$ over $L$ of level
$\fN$ such that for all prime ideals $\fq \nmid \ell\fN$, we have
\[
\theta(T_\fq)=
\Tr(\overline{\rho}
({\Frob}_\fq)). 
\]
\end{conj}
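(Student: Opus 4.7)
The statement is Serre's modularity conjecture for odd, irreducible, continuous two-dimensional mod $\ell$ Galois representations of a general number field $L$, restricted here to the flat, trivial-character case with $\ell$ unramified in $L$. In full generality this is open. My plan is therefore a roadmap indicating the route one would attempt and where the real obstructions lie, rather than a complete argument.

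The proof would follow the template of Khare--Wintenberger over $\Q$, as extended to totally real fields by Kisin, Gee, Breuil, Diamond, and others. One needs two complementary ingredients. First, a supply of automorphic Galois representations on the $L$ side: via Scholze's construction of Galois representations attached to torsion classes in the cohomology of the arithmetic locally symmetric space for $\GL_2/L$, every mod $\ell$ eigenform $\theta$ of the shape in Conjecture~\ref{conj:modularity} should give rise to a compatible two-dimensional mod $\ell$ representation of $G_L$. Second, a modularity lifting theorem of $R=T$ type, saying that any two-dimensional representation of $G_L$ which matches local Serre data with a known automorphic one, and agrees with it modulo $\ell$, is itself automorphic. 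Given these, one closes the loop by the switching trick: realise $\overline{\rho}|_{G_{L'}}$ automorphically after a solvable base change to a suitable totally real or CM $L'/L$, propagate back using cyclic base change and automorphic induction, and finish by an induction on Serre weight and tame conductor powered by a mod $\ell$ local Langlands correspondence.

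The hard part is the $R=T$ ingredient whenever $L$ has a complex place. The Taylor--Wiles patching method, which underpins every known modularity lifting result in the totally real and CM settings, fails to produce a free patched module because the cohomology of the arithmetic locally symmetric space is spread across several degrees rather than concentrated in one. The Calegari--Geraghty program repairs this conditionally, at the cost of assuming vanishing of integral cohomology outside the expected middle range, together with local--global compatibility at primes above $\ell$ for Scholze's torsion Galois representations. These auxiliary inputs remain widely open and, in my view, constitute the principal outstanding obstacle. A secondary difficulty is the scarcity of solvable totally real or CM extensions of a general $L$ over which potential modularity can be initialised while preserving the local Serre data of $\overline{\rho}$.

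For the application in this paper the conjecture is invoked only to deduce modularity of the Frey curve $F/L$ of Proposition~\ref{prop:prelim}, so the weaker statement that every elliptic curve over $L$ is modular would suffice. The route to that weaker statement still passes through essentially the same obstructions, and I would not expect a shortcut bypassing the Calegari--Geraghty vanishing conjectures for integral cohomology.
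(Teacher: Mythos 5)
This statement is labelled as a \emph{conjecture} in the paper (a weak form of Serre's modularity conjecture over general number fields), and the paper offers no proof of it: it is simply assumed as a hypothesis in Theorem~\ref{thm:Main2}. You correctly recognise that the statement is open and supply a roadmap rather than a proof, and your identification of the main obstructions (the failure of Taylor--Wiles patching when $L$ has a complex place, the conditional Calegari--Geraghty repair requiring vanishing of integral cohomology outside the middle range, and local--global compatibility at $\ell$ for Scholze's torsion Galois representations) is an accurate account of the state of the art. There is nothing in the paper to compare your route against, so no discrepancy arises; your closing observation that even the weaker statement ``every elliptic curve over $L$ is modular'' would suffice for the application, but faces essentially the same obstructions, is also correct.
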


The second conjecture we need is widely known as Serre's uniformity
conjecture (see, for example \cite[Conjecture 1.3]{BELOV}).
\begin{conj}\label{conj:uniformity}
Let $L$ be a number field. Then there is a constant $C_L$, such that
for any non-CM elliptic curve $E/L$ and any prime $\ell>C_L$,
the mod $\ell$ representation $\overline{\rho}_{E,\ell}$
is surjective.
\end{conj}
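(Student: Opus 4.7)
The plan is to attack the conjecture by the standard subgroup-theoretic dichotomy: if $\overline{\rho}_{E,\ell}$ is not surjective, then, since $\ell$ is large and $E$ is non-CM, the image lies in a maximal subgroup of $\GL_2(\F_\ell)$. By Dickson's classification, such a maximal subgroup is (up to conjugation) one of the following: a Borel subgroup, the normalizer of a split Cartan, the normalizer of a non-split Cartan, or a subgroup whose projective image is one of the exceptional groups $A_4$, $S_4$, $A_5$. I would dispose of each case by bounding $\ell$ in terms of invariants of $L$ and $E$, and then removing the dependence on $E$ using a Northcott-type argument on the appropriate modular curve over $L$.

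First, I would treat the exceptional cases. If the projective image is $A_4$, $S_4$ or $A_5$, then the order of the image divides $24 \cdot 120 = 2880$ times the centre, so any prime $\ell$ appearing in the image order is bounded by $5$; for $\ell > 5$ this case cannot occur. Next I would treat the Borel case: reducibility of $\overline{\rho}_{E,\ell}$ gives an $L$-rational cyclic $\ell$-isogeny out of $E$, so $E$ yields an $L$-point on $X_0(\ell)$. The task is to bound $\ell$ by a constant depending only on $L$ such that $X_0(\ell)(L)$ contains no non-CM, non-cuspidal points. Over $\Q$ this is Mazur's isogeny theorem; for general $L$ one would invoke the uniform boundedness of rational isogenies, which is known over $\Q$ and over imaginary quadratic fields (Momose, Parent, Larson--Vaintrob conditional on GRH, and unconditional recent progress by Banwait--Derickx and others) and is expected in general.

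The Cartan cases are the technical heart. For the normalizer of a non-split Cartan, the curve $X_{\mathrm{ns}}^{+}(\ell)$ has genus growing with $\ell$, and one wants to show that for $\ell$ large it has only CM points over $L$; over $\Q$ this is the hardest case, attacked by Bilu--Parent--Rebolledo via Runge's method and, for specific $\ell$, by explicit computation. The normalizer-of-split-Cartan case is handled over $\Q$ by the Bilu--Parent theorem on $X_{\mathrm{sp}}^{+}(\ell)(\Q)$, again using Runge's method, which in principle generalises when $L$ has enough archimedean places or enough inert primes, but producing a bound $C_L$ depending only on $L$ requires a uniform version. I would therefore formulate a Runge-type inequality on integral points of a suitable affine model of $X_{\mathrm{sp}}^{+}(\ell)$ over $\mathcal{O}_L[1/\ell]$, combining it with lower bounds for heights of non-cuspidal $L$-points coming from Arakelov theory on the modular curve.

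The main obstacle, and the reason this is still a conjecture, is the Cartan case for a general number field $L$: one lacks a uniform method for ruling out $L$-points on $X_{\mathrm{sp}}^{+}(\ell)$ and $X_{\mathrm{ns}}^{+}(\ell)$ as $\ell \to \infty$. In particular, Runge's condition (that the cusps not all lie in a single Galois orbit over $L$) can fail, and even when it holds the estimates on heights of integral points depend delicately on $L$. A plausible route forward is to combine: (a) results \`a la Momose--Parent bounding reducible cases, (b) Mazur's formal immersion method generalised to number fields (as in work of Kamienny and Derickx--Kamienny--Stein--Stoll for torsion), and (c) a uniform Runge argument on the Cartan modular curves over $\mathcal{O}_L$. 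Absent such a uniform bound, one can only prove weaker effective statements depending on both $L$ and the height of $E$, which is why the conjecture remains open and is invoked as a hypothesis in Theorem~\ref{thm:Main2}.
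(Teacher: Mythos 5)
This statement is Serre's uniformity conjecture over number fields; the paper does not prove it and does not claim to. It is stated as Conjecture~\ref{conj:uniformity}, cited from the literature, and invoked purely as a hypothesis in Theorem~\ref{thm:Main2}. So there is no proof in the paper against which to compare your argument, and your own text correctly concludes that no complete proof is currently available.

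As a survey of the state of the art your sketch is essentially accurate: the reduction via Dickson's classification to the Borel, split-Cartan-normalizer, non-split-Cartan-normalizer and exceptional cases is the standard framework, and you correctly locate the genuine obstruction in the Cartan cases over a general number field $L$, where neither Runge's method nor Mazur's formal-immersion technique is known to yield a bound depending only on $L$. Two small corrections. First, in the exceptional case your reasoning (``the order of the image divides $2880$ times the centre, so any prime $\ell$ appearing in the image order is bounded by $5$'') is not the right argument: the issue is not whether $\ell$ divides the order of the image, but that the projective image of the inertia subgroup at a prime above $\ell$ contains a cyclic subgroup of order roughly $(\ell-1)/e$ (with $e$ the ramification index), which cannot embed in $A_4$, $S_4$ or $A_5$ once $\ell$ exceeds a bound depending only on $[L:\Q]$. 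Second, in the Borel case the relevant uniform isogeny bounds over number fields beyond $\Q$ are themselves only partially known (and some of the results you cite are conditional), so even that case is not settled in the generality required. None of this affects the paper, which treats the statement as an open conjecture; your honest conclusion that it remains open is the correct one.
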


The third conjecture we need is a special case
of what is widely known as the Ramanujan--Petersson conjecture.
A proof of this for CM fields has recently been
announced by Boxer, Calegari, Gee, Newton and Thorne 
\cite[Theorem A]{RamanujanPetersson}.
\begin{conj}\label{conj:RP}
Let $L$ be a number field.
Let $\ff$ be a weight $2$ complex eigenform over $L$
of level $\fN$, and let $\fp \nmid \fN$ be a prime ideal of $\OO_L$.
Then $\lvert a_\fp(\ff) \rvert \le 2 \sqrt{\Norm_{L/\Q}(\fp)}$.
\end{conj}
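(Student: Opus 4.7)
\emph{Plan.} The proof of Theorem~\ref{thm:Main2} parallels that of Theorem~\ref{thm:Main1}, with the three conjectures substituting for their unconditional counterparts over totally real fields. First I would observe that neither the statement nor the proof of Proposition~\ref{prop:prelim} actually uses hypotheses (ii), (iii) of Theorem~\ref{thm:Main1}: those assumptions enter only through the preceding lemma that establishes $L$ is totally real. The construction of the Frey curve $F/L$ via $2$-descent (Lemma~\ref{lem:2descent}), Minkowski's theorem (Lemma~\ref{lem:Minkowski}), and the gcd analysis of Lemma~\ref{lem:gcd} goes through verbatim with $L$ now an arbitrary number field of degree at most $24$. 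I would therefore restate Proposition~\ref{prop:prelim} dropping hypotheses (ii), (iii), and apply it to obtain, for each sufficiently large prime $\ell$ with $B_n=u^\ell$, a Frey curve $F/L$ satisfying (a) and (b).

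Next, the irreducibility of $\overline{\rho}_{F,\ell}$ (the role played by Theorem~\ref{thm:irred} in the totally real setting) follows from Conjecture~\ref{conj:uniformity}, once one verifies $F$ is non-CM. The multiplicative reduction at $\fp$ with $\ell \mid v_\fp(\Delta_F)$ forces $v_\fp(j(F))=-v_\fp(\Delta_F)\le -\ell$, contradicting the integrality of CM $j$-invariants; hence $F$ is automatically non-CM and Conjecture~\ref{conj:uniformity} yields surjectivity (hence irreducibility) of $\overline{\rho}_{F,\ell}$ for $\ell>C_L$. Conjecture~\ref{conj:modularity} then plays the combined role of Theorems~\ref{thm:modularity} and~\ref{thm:ll}: applied to $\overline{\rho}_{F,\ell}$, which is odd, irreducible, semistable (hence finite-flat) at all $\fl\mid\ell$ by property~(a), has trivial prime-to-$\ell$ determinant (since $\det\overline{\rho}_{F,\ell}$ is the mod $\ell$ cyclotomic character), and satisfies $\ell$ unramified in $L$ once $\ell > \lvert \Delta_L\rvert$, it produces a weight $2$ mod $\ell$ eigenform $\theta$ over $L$ of level $\fN$ equal to the prime-to-$\ell$ Serre conductor, with $\theta(T_\fq) = \Tr\overline{\rho}_{F,\ell}(\Frob_\fq)$ for $\fq\nmid \ell\fN$. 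Exactly as in the proof of Theorem~\ref{thm:Main1}, $\fN$ is supported on $\cS$ with conductor exponents bounded by $2+6[L:\Q]$, so $\fN$ ranges over a finite set depending only on $E$ and $P$.

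The hard step, and the main obstacle, is the bridge to Conjecture~\ref{conj:RP}: that conjecture bounds Hecke eigenvalues of \emph{complex} eigenforms, whereas Conjecture~\ref{conj:modularity} outputs only a mod $\ell$ eigenform. Following the strategy of \c{S}eng\"{u}n--Siksek \cite{SengunSiksek} and \c{T}urca\c{s} \cite{Turcas} in their conditional proofs of Fermat-type results over general number fields, one argues that, across the finite list of candidate levels $\fN$, the relevant integral cohomology has finite torsion of bounded order, so for $\ell$ larger than these torsion orders every mod $\ell$ eigenform at level $\fN$ lifts to a complex eigenform $\ff$ over $L$. Once $\theta$ is lifted to such an $\ff$, the argument concludes exactly as in the proof of Theorem~\ref{thm:Main1}: the multiplicative reduction of $F$ at $\fp$ with $\ell \mid v_\fp(\Delta_F)$ gives the congruence $\pm(\Norm_{L/\Q}(\fp)+1) \equiv a_\fp(\ff) \pmod{\lambda}$ for some prime $\lambda\mid \ell$ of $\Q_\ff$, producing a non-zero integer divisible by $\ell$; Conjecture~\ref{conj:RP} bounds this integer absolutely in terms of the fixed prime $\fp$ and the finite list of candidate $\ff$, so $\ell$ is bounded. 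Combining with Theorem~\ref{thm:ERS} for the finitely many remaining exponents yields the finiteness of perfect powers.
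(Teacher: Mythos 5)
The statement you were given is Conjecture~\ref{conj:RP}, the Ramanujan--Petersson bound $\lvert a_\fp(\ff)\rvert \le 2\sqrt{\Norm_{L/\Q}(\fp)}$ for weight~$2$ complex eigenforms over a general number field. This is an open conjecture that the paper \emph{assumes} as a hypothesis of Theorem~\ref{thm:Main2}; the paper contains no proof of it and does not claim one (it only remarks that the CM-field case has recently been announced by Boxer, Calegari, Gee, Newton and Thorne). Your proposal does not prove the conjecture either: nowhere in your text is the eigenvalue bound itself established. What you have written is instead a reconstruction of the proof of Theorem~\ref{thm:Main2}, i.e.\ an account of \emph{how} Conjectures~\ref{conj:modularity}, \ref{conj:uniformity} and~\ref{conj:RP} are deployed; the Ramanujan--Petersson bound appears only as an input in your final paragraph. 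As a proof of the stated conjecture, then, the proposal is addressing the wrong statement entirely, and no elementary argument of the kind you sketch could close that gap --- the conjecture is a deep open problem about automorphic forms, not a consequence of the modular method.

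For what it is worth, judged as a proof of Theorem~\ref{thm:Main2} your outline tracks the paper's Section~\ref{sec:conjectures} accurately: Proposition~\ref{prop:prelim} survives without hypotheses (ii) and (iii), Conjecture~\ref{conj:modularity} replaces Theorems~\ref{thm:modularity} and~\ref{thm:ll}, Conjecture~\ref{conj:uniformity} replaces Theorem~\ref{thm:irred} (your non-CM argument via non-integrality of $j(F)$ at $\fp$ is a slight elaboration of the paper's one-line remark that multiplicative reduction rules out CM), and the lift of the mod~$\ell$ eigenform to a complex one follows \c{S}eng\"{u}n--Siksek via Ash--Stevens. But none of this bears on the conjecture you were asked about.
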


\begin{proof}[Proof of Theorem~\ref{thm:Main2}]
Most steps in the proof of Theorem~\ref{thm:Main1} carry on
unchanged to the setting of Theorem~\ref{thm:Main2}.
In particular, Proposition~\ref{prop:prelim}
continues to hold, with the only difference being that the number field $L$
given in \eqref{eqn:KL} may no longer be totally real.
We explain the derivation of Theorem~\ref{thm:Main2}
from Proposition~\ref{prop:prelim} assuming
the three aforementioned conjectures.

Again, we need to show that the exponent $\ell$ is bounded.
Let $F$, $\kappa$, $\cS$, $\fp$ be as in Proposition~\ref{prop:prelim}.	
There is currently no analogue of the level lowering
Theorem~\ref{thm:ll} in the setting of general number fields.
Instead we need to rely on Serre's modularity conjecture
(Conjecture~\ref{conj:modularity}) which combines the roles
of Theorems~\ref{thm:modularity} and~\ref{thm:ll},
and which we apply to the mod $\ell$ representation
$\overline{\rho}_{F,\ell}$.
Conjecture~\ref{conj:modularity} requires that
$\overline{\rho}_{F,\ell}$ is absolutely irreducible,
and not just irreducible. For this we rely on 
Serre's uniformity conjecture (Conjecture~\ref{conj:uniformity}),
and we replace the bound $C_L$ coming from Theorem~\ref{thm:irred}
with the bound $C_L$ coming from Conjecture~\ref{conj:uniformity};
for this we need that $F$ is non-CM, but this is true
as it has multiplicative reduction at $\fp$.
As before, $F$ is semistable at all $\fq \notin \cS$,
and satisfies $\ell \mid v_\fq(\Delta)$, and in particular
this is true for all $\fl \mid \ell$. This guarantees
(see for example \cite{Serre87}) that 
$\overline{\rho}_{F,\ell}$ is unramified
at $\fq \notin \cS$ satisfying $\fq \nmid \ell$,
and that it is finite flat at all $\fl \mid \ell$.
Denote the Serre conductor of $\overline{\rho}_{F,\ell}$
by $\fN^\prime$. This divides the \lq\lq level\rq\rq\
$\fN$ given by \eqref{eqn:Nl}, and indeed $\fN^\prime$ 
is the prime-to-$\ell$ factor of $\fN$. What matters is that
there are only finitely many possibilities for $\fN$ (depending
only on $E$ and $P$) and therefore only finitely many possibilities
for $\fN^\prime$. It follows from Conjecture~\ref{conj:modularity}
that there is a weight $2$  mod $\ell$ newform $\theta$
over $L$ such that for all $\fq \nmid \ell \fN$,
we have that $\theta(T_\fq)=\Tr(\overline{\rho}_{F,\ell}(\Frob_\fq))$.
Here we think of $\theta$ as a homomorphism from the Hecke algebra
into $\overline{\F}_\ell$, and $T_\fq$ denotes the
Hecke operator corresponding to $\fq$.
Our next step is to lift $\theta$ to a complex eigenform
$\ff$ without changing the level $\fN^\prime$.
We may do this for $\ell$ sufficiently large,
as explained in \cite[Section 2.1]{SengunSiksek},
building on a theorem of Ash and Stevens \cite[Section 1.2]{AshStevens}.
Finally we bound $\ell$ from the congruence \eqref{eqn:congruence}.
To do this we need the bound 
$\lvert a_{\fp}(\ff) \rvert < 2 \sqrt{\Norm_{L/\Q}(\fp)}$, which is supplied by Conjecture~\ref{conj:RP}.
\end{proof}

\section{Final Remarks}

\subsection*{Effectivity}
Theorem~\ref{thm:Main1} is not effective, as it relies on
the theorem of Everest, Reynolds and Stevens (Theorem~\ref{thm:ERS})
which in turn relies on Faltings' theorem.
It is however natural to ask if the bound for the
exponent $\ell$ yielded by the
proof of Theorem~\ref{thm:Main1} is effective.
Alas, it is not, because the finiteness of the 
set of non-modular $j$-invariants (Theorem~\ref{thm:modularity})
is a consequence of Faltings' theorem.
If we however assume modularity of elliptic curves over
$L$ then the bound on $\ell$ is effectively computable.
The constant $\kappa$ in Proposition~\ref{prop:prelim}
ultimately comes from Silverman's primitive divisor
theorem (Theorem~\ref{thm:silverman}) which has
been made effective \cite{Verzobio}. The paper
\cite{siksekbound} gives an effective
strategy for computing the constant $C_L$
in Theorem~\ref{thm:irred}. Finally, the Hilbert modular
forms appearing in the proof of Theorem~\ref{thm:Main1},
and their Hecke eigenvalues,
are algorithmically computable \cite{DembeleVoight}.

\subsection*{The non-integrality hypothesis}
The non-integrality hypothesis (assumption (i)
in Theorem~\ref{thm:Main1}) is essential to our argument,
as it had been in previous work of Reynolds \cite{Reynolds}
and Alfaraj \cite{Alfaraj2}. Indeed, this assumption
allows us to conclude the existence of a fixed 
multiplicative prime $\fp$ that is independent of $\ell$,
and this allowed us to bound $\ell$ in the final
step of the proof of Theorem~\ref{thm:Main1}.
If $P$ is integral, then $B_1=1=1^{\ell}$ and hence
$\ell$ is unbounded. It seems that the non-integrality 
hypothesis cannot be eliminated with some major new idea.

\bibliographystyle{plain}
\bibliography{ref}
\end{document}